\newtheorem{theorem}{Theorem}[section]
\newtheorem{definition}[theorem]{Definition}
\newtheorem{lemma}[theorem]{Lemma}
\newtheorem{proposition}[theorem]{Proposition}
\newtheorem{remark}[theorem]{Remark}
\newtheorem{example}[theorem]{Example}
\title{Sublattices and $\Delta$-blocks of orthomodular posets}
\author{Ivan~Chajda and Helmut~L\"anger}
\date{}
\begin{document}
\footnotetext[1]{Support of the research by the Austrian Science Fund (FWF), project I~4579-N, and the Czech Science Foundation (GA\v CR), project 20-09869L, entitled ``The many facets of orthomodularity'', as well as by \"OAD, project CZ~02/2019, entitled ``Function algebras and ordered structures related to logic and data fusion'', and, concerning the first author, by IGA, project P\v rF~2020~014, is gratefully acknowledged.}
\maketitle
\begin{abstract}
For orthoposets we introduce a binary relation $\mathrel\Delta$ and a binary operator $d(x,y)$ which are generalizations of the binary relation $\mathrel{\rm C}$ and the commutator $c(x,y)$, respectively, known for orthomodular lattices. We characterize orthomodular posets among orthoposets and orthogonal posets. Moreover, we describe connections between the relations $\mathrel\Delta$ and $\leftrightarrow$ and the operator $d(x,y)$. In details we investigate certain orthomodular posets of subsets of a finite set. In particular we describe maximal orthomodular sublattices and Boolean subalgebras of such orthomodular posets. Finally, we study properties of $\Delta$-blocks with respect to Boolean sublattices and distributive subposets they include.
\end{abstract}

{\bf AMS Subject Classification:} 06A11, 06C15, 03G12

{\bf Keywords:} Orthomodular poset, orthoposet, orthogonal poset, Boolean poset, generalized commutator, relation $\mathrel\Delta$, $\Delta$-block

\section{Introduction}

As pointed out firstly in \cite{Bi}, orthomodular lattices and, in particular, orthomodular posets play an important role in the axiomatization of the logic of quantum mechanics. Several books are devoted to orthomodular lattices, cf.\ the monographs \cite{Be} and \cite K, where also some results on orthomodular posets are presented. The monograph \cite{PP} by P.~Pt\'ak and S.~Pulmannov\'a is devoted to the study of $\sigma$-orthocomplete orthomodular posets. Let us mention also several papers on orthomodular posets published by J.~Tkadlec, see e.g.\ \cite{T89} and \cite{T97}. The reader can find a list of sources on this topic in the references of the monograph \cite{PP}.

There are two possible approaches to orthomodular posets:
\begin{itemize}
\item One can study orthomodular posets as partial orthomodular lattices where lattice join is defined for orthogonal elements and then, applying De Morgan's laws, one can derive some lattice meets.
\item One can use the machinery involved for posets used by the authors also in their previous papers \cite{CKoL} -- \cite{CLP18} (partly written with further coauthors) on complemented posets, posets with an antitone involution or weakly orthomodular posets etc.
\end{itemize}
In fact, we will apply here the second approach since it was not used formerly in the quoted sources. Our results are accompanied by examples which will illuminate our concepts and results.

\section{Preliminaries}

Let $(P,\leq)$ be a poset and $'$ a unary operation on $P$. Then $'$ is called an {\em antitone involution} of $(P,\leq)$ if the following conditions hold:
\begin{itemize}
\item if $x\leq y$ then $y'\leq x'$,
\item $x''\approx x$
\end{itemize}
($x,y\in P$). Now let $(P,\leq,0,1)$ be a bounded poset and $'$ a unary operation on $P$. We say that $x,y\in P$ are {\em orthogonal} and write $x\perp y$ if $x\leq y'$. Now $'$ is called a {\em complementation} of $(P,\leq,0,1)$ if
\begin{itemize}
\item $x\vee x'\approx1$ and $x\wedge x'\approx0$
\end{itemize}
($x\in P$). An {\em orthoposet} is an ordered quintuple $\mathbf P=(P,\leq,{}',0,1)$ such that $(P,\leq,0,1)$ is a bounded poset and $'$ is an antitone involution of $(P,\leq)$ and a complementation of $(P,\leq,0,1)$. If, moreover, the following condition holds
\begin{itemize}
\item if $x\perp y$ then $x\vee y$ is defined
\end{itemize}
($x,y\in P$) then $\mathbf P$ is called an {\em orthogonal poset}. If, moreover, $\mathbf P$ satisfies the following condition
\begin{itemize}
\item if $x\leq y$ then $x\vee(x\vee y')'=y$
\end{itemize}
($x,y\in P$) then $\mathbf P$ is called an {\em orthomodular poset}. The last condition is called the {\em orthomodular law}. It is equivalent to its dual form
\begin{itemize}
\item if $x\leq y$ then $(y'\vee(x\vee y')')'=x$
\end{itemize}
($x,y\in P$).

\begin{example}\label{ex2}
The poset $\mathbf P=(P,\leq,{}',0,1)$ depicted in Fig.~1
\vspace*{-2mm}
\begin{center}
\setlength{\unitlength}{7mm}
\begin{picture}(8,8)
\put(4,1){\circle*{.3}}
\put(1,3){\circle*{.3}}
\put(3,3){\circle*{.3}}
\put(5,3){\circle*{.3}}
\put(7,3){\circle*{.3}}
\put(1,5){\circle*{.3}}
\put(3,5){\circle*{.3}}
\put(5,5){\circle*{.3}}
\put(7,5){\circle*{.3}}
\put(4,7){\circle*{.3}}
\put(4,1){\line(-3,2)3}
\put(4,1){\line(-1,2)1}
\put(4,1){\line(1,2)1}
\put(4,1){\line(3,2)3}
\put(4,7){\line(-3,-2)3}
\put(4,7){\line(-1,-2)1}
\put(4,7){\line(1,-2)1}
\put(4,7){\line(3,-2)3}
\put(1,3){\line(0,1)2}
\put(1,3){\line(1,1)2}
\put(3,3){\line(-1,1)2}
\put(3,3){\line(0,1)2}
\put(5,3){\line(0,1)2}
\put(5,3){\line(1,1)2}
\put(7,3){\line(-1,1)2}
\put(7,3){\line(0,1)2}
\put(3.85,.25){$0$}
\put(.3,2.85){$a$}
\put(3.4,2.85){$b$}
\put(4.3,2.85){$c$}
\put(7.4,2.85){$d$}
\put(.3,4.85){$d'$}
\put(3.4,4.85){$c'$}
\put(4.3,4.85){$b'$}
\put(7.4,4.85){$a'$}
\put(3.85,7.4){$1$}
\put(3.2,-.75){{\rm Fig.\ 1}}
\end{picture}
\end{center}
\vspace*{4mm}
is an orthogonal poset, but not orthomodular since $a\leq d'$, but $a\vee(d'\wedge a')=a\vee0=a\neq d'$.
\end{example}

Now let $\mathbf P$ be an orthoposet, $a,b\in P$ and $A,B\subseteq P$. We define
\begin{align*}
L(A) & :=\{x\in P\mid x\leq y\text{ for all }y\in A\}, \\
U(A) & :=\{x\in P\mid y\leq x\text{ for all }y\in A\}.
\end{align*}
Instead of $L(\{a\})$, $L(\{a,b\})$, $L(A\cup\{a\})$, $L(A\cup B)$, $L(U(A))$ we shortly write $L(a)$, $L(a,b)$, $L(A,a)$, $L(A,B)$, $LU(A)$. Analogously we proceed in similar cases. We define binary relations $\mathrel\Delta$ and $\leftrightarrow$ on $P$ as follows:
\begin{align*}
a\mathrel\Delta b & \text{ if and only if }U(a)=U(L(a,b),L(a,b')), \\
 a\leftrightarrow b & \text{ if and only if there exist }c,d,e\in P\text{ with }c\perp d\perp e\perp c, a=c\vee d\text{ and }b=d\vee e
\end{align*}
($a,b\in P$). Since $'$ is an antitone involution on $(P,\leq)$, $a\mathrel\Delta b$ is equivalent to $L(a')=L(U(a',b),U(a',b'))$ ($a,b\in P$). The relation $\leftrightarrow$ for orthomodular posets was introduced in \cite{PP}. Moreover, we put
\[
d(a,b):=U(L(a,b),L(a,b'),L(a',b),L(a',b'))
\]
in arbitrary orthoposets. This operator $d$ generalizes the notion of a commutator for orthomodular lattices (see \cite{Be}) to the case of posets. An {\em ortholattice} (see \cite{Bi}) is an algebra $(L,\vee,\wedge,{}',0,1)$ of type $(2,2,1,0,0)$ such that $(L,\vee,\wedge,0,1)$ is a bounded lattice and $'$ is an antitone involution which is a complementation.  On $L$ we define a binary relation $\mathrel{\rm C}$ (cf.\ \cite{Be}) as follows:
\begin{align*}
a\mathrel{{\rm C}}b & \text{ if and only if }a=(a\wedge b)\vee(a\wedge b')
\end{align*}
($a,b\in L$). An {\em orthomodular lattice} is an ortholattice satisfying the orthomodular law. It can be shown (see \cite K) that in orthomodular lattices $\leftrightarrow$ and $\mathrel{{\rm C}}$ coincide.

\section{Characterization of orthomodular posets}

In this section we study which orthogonal posets are orthomodular. The connection between the the relations $\mathrel{{\rm C}}$ and $\mathrel\Delta$ is as follows.

\begin{lemma}
Let $(L,\vee,\wedge,{}',0,1)$ be an ortholattice and $a,b\in L$. Then $a\mathrel\Delta b$ if and only if $a\mathrel{{\rm C}}b$.
\end{lemma}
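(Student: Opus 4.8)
The plan is to prove both directions by unwinding the definitions and exploiting the fact that in an ortholattice all the relevant joins and meets exist, so the operators $L$, $U$ collapse to ordinary lattice operations. First I would record the two elementary facts that drive everything: for any elements $x,y$ in a lattice, $L(x,y)=L(x\wedge y)$ and $U(x,y)=U(x\vee y)$. Applying the first of these to the definition of $\mathrel\Delta$ gives $L(a,b)=L(a\wedge b)$ and $L(a,b')=L(a\wedge b')$, so that $U(L(a,b),L(a,b'))=U(L(a\wedge b),L(a\wedge b'))=U\bigl((a\wedge b)\vee(a\wedge b')\bigr)$, using the second fact to combine the two lower sets. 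Thus the condition $U(a)=U(L(a,b),L(a,b'))$ defining $a\mathrel\Delta b$ becomes exactly $U(a)=U\bigl((a\wedge b)\vee(a\wedge b')\bigr)$.

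The key observation is then that in any poset $U(x)=U(y)$ is equivalent to $x=y$, because $U(x)=U(y)$ forces $x\in U(y)$ and $y\in U(x)$, i.e.\ $y\leq x$ and $x\leq y$. I would state this as a small preliminary remark. Combined with the reduction above, it shows that $a\mathrel\Delta b$ holds if and only if $a=(a\wedge b)\vee(a\wedge b')$, which is precisely the defining condition of $a\mathrel{\rm C}b$. This closes both directions simultaneously, so the lemma is really a single chain of equivalences rather than two separate implications.

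The main steps, in order, are: (i) rewrite $L(a,b)$ and $L(a,b')$ as principal lower sets $L(a\wedge b)$ and $L(a\wedge b')$; (ii) merge the two lower sets via $U(L(u),L(v))=U(u\vee v)$ into $U\bigl((a\wedge b)\vee(a\wedge b')\bigr)$; (iii) invoke the antisymmetry remark $U(x)=U(y)\Leftrightarrow x=y$ to turn the equation of upper sets into the equation $a=(a\wedge b)\vee(a\wedge b')$; (iv) identify this with the definition of $\mathrel{\rm C}$.

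I expect step (ii) to be the only place demanding genuine care, since it requires justifying that $U(L(u),L(v))=U(u\vee v)$ rather than merely $U(u,v)=U(u\vee v)$. The point is that $U(L(u),L(v))$ consists of the common upper bounds of $L(u)$ and of $L(v)$; because $u\in L(u)$ and $v\in L(v)$, any such element is an upper bound of both $u$ and $v$, hence of $u\vee v$, and conversely $u\vee v$ is itself the least upper bound so every element of $U(u\vee v)$ lies above all of $L(u)\cup L(v)$. Verifying this equality cleanly is the crux; once it is in hand, the rest is a direct substitution and the appeal to antisymmetry. No orthomodularity is needed — only the lattice structure and the involution, the latter entering merely through the symmetric treatment of $b$ and $b'$.
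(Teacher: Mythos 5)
Your proof is correct and follows essentially the same route as the paper: both unwind the definition of $\mathrel\Delta$ through the chain $U(L(a,b),L(a,b'))=U(L(a\wedge b),L(a\wedge b'))=U((a\wedge b)\vee(a\wedge b'))$ and then use antisymmetry of $\leq$ to convert the equality of upper sets into $a=(a\wedge b)\vee(a\wedge b')$. The only cosmetic difference is that the paper inserts the intermediate step $U(L(u),L(v))=U(u,v)$ before passing to $U(u\vee v)$, whereas you justify the merged equality $U(L(u),L(v))=U(u\vee v)$ directly.
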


\begin{proof}
The following are equivalent:
\begin{align*}
   a & \mathrel\Delta b, \\
U(a) & =U(L(a,b),L(a,b')), \\
U(a) & =U(L(a\wedge b),L(a\wedge b')), \\
U(a) & =U(a\wedge b,a\wedge b'), \\
U(a) & =U((a\wedge b)\vee(a\wedge b')), \\
   a & =(a\wedge b)\vee(a\wedge b'), \\
   a & \mathrel{{\rm C}}b.
\end{align*}
\end{proof}

The following proposition was proved in \cite{PP} for orthomodular posets and in \cite{Be} for orthomodular lattices.

\begin{proposition}\label{prop1}
An orthogonal poset $(P,\leq,{}',0,1)$ is orthomodular if and only if
\[
x\leq y\text{ and }y\wedge x'=0\text{ imply }x=y
\]
{\rm(}$x,y\in P${\rm)}.
\end{proposition}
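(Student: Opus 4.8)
The plan is to reduce the orthomodular law to the stated cancellation-type condition by means of a single structural observation about the antitone involution. The key remark is that whenever $x\leq y$ in an orthogonal poset, $x$ is orthogonal to $y'$ (since $x\leq y=(y')'$), so $x\vee y'$ exists, and I claim its complement $(x\vee y')'$ is exactly the infimum $y\wedge x'$. Indeed, $x\vee y'\geq x$ and $x\vee y'\geq y'$ force $(x\vee y')'\leq x'$ and $(x\vee y')'\leq y$; conversely, any lower bound $v$ of $\{x',y\}$ satisfies $v'\geq x$ and $v'\geq y'$, hence $v'\geq x\vee y'$ and therefore $v\leq(x\vee y')'$. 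Thus, under the assumption $x\leq y$, the meet $y\wedge x'$ always exists and equals $(x\vee y')'$; in particular the hypothesis $y\wedge x'=0$ is meaningful and says precisely that $(x\vee y')'=0$. This De~Morgan-type identity for the involution is the one place where existence of joins and meets in a merely partially ordered structure must be argued with care.

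For the forward implication I assume orthomodularity and take $x\leq y$ with $y\wedge x'=0$. Rewriting the orthomodular law $x\vee(x\vee y')'=y$ by means of the identity above gives $x\vee(y\wedge x')=y$; substituting $y\wedge x'=0$ and using $0\leq x$ yields $x\vee0=x=y$, as desired.

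For the converse I assume the cancellation condition and verify the orthomodular law for an arbitrary pair $x\leq y$. Put $z:=(x\vee y')'=y\wedge x'$ and $w:=x\vee z$; the join $w$ exists because $z\leq x'$ gives $x\perp z$. As both $x$ and $z$ lie below $y$, we have $w\leq y$, so by the hypothesis it suffices to show $y\wedge w'=0$. This is the heart of the argument: any lower bound $v$ of $\{y,w'\}$ satisfies $v'\geq w=x\vee z$, hence $v\leq x'$ and $v\leq z'$; from $v\leq y$, $v\leq x'$ together with $z=\inf\{y,x'\}$ I obtain $v\leq z$, and combining with $v\leq z'$ forces $v\leq z\wedge z'=0$. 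Thus $0$ is the only lower bound of $\{y,w'\}$, so $y\wedge w'=0$, and the condition yields $w=x\vee z=y$, which is exactly the orthomodular law $x\vee(x\vee y')'=y$.

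I expect the main obstacle to be the careful bookkeeping of existence of the relevant suprema and infima rather than any deep idea: once the identity $(x\vee y')'=y\wedge x'$ is established, both directions are short, and the only genuine computation is the verification of $y\wedge w'=0$ via the chain $v\leq z$ and $v\leq z'$.
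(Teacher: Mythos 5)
Your proof is correct. The paper itself does not prove this proposition --- it simply cites Pt\'ak--Pulmannov\'a and Beran --- so there is no in-paper argument to compare against; your self-contained derivation is a welcome addition. The one point that genuinely needs care, namely that for $x\leq y$ the meet $y\wedge x'$ automatically exists in an orthogonal poset and equals $(x\vee y')'$, you handle correctly via the antitone involution, and this makes the hypothesis $y\wedge x'=0$ meaningful and turns both directions into short computations. The converse direction is also sound: the chain $v\leq w'\Rightarrow v\leq x',z'$, then $v\leq z$ from $z=y\wedge x'$, then $v\leq z\wedge z'=0$ correctly establishes $y\wedge w'=0$, and the cancellation hypothesis applied to $w\leq y$ gives the orthomodular law. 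This is essentially the same equivalence used implicitly in the paper's $\mathbf O_6$-characterization that follows, so your argument fits the surrounding material well.
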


Hence, an orthogonal poset $(P,\leq,{}',0,1)$ is orthomodular if and only if for every element $x$ of $P$ the element $x'$ is the unique element $y$ of $P$ satisfying $x\perp y$ and $x\vee y=1$.

Let $\mathbf O_6$ denote the orthogonal poset (in fact an ortholattice) depicted in Fig.~2. It is well-known and easy to check that $\mathbf O_6$ is not an orthomodular lattice.
\vspace*{-2mm}
\begin{center}
\setlength{\unitlength}{7mm}
\begin{picture}(4,8)
\put(2,1){\circle*{.3}}
\put(1,3){\circle*{.3}}
\put(3,3){\circle*{.3}}
\put(1,5){\circle*{.3}}
\put(3,5){\circle*{.3}}
\put(2,7){\circle*{.3}}
\put(2,1){\line(-1,2)1}
\put(2,1){\line(1,2)1}
\put(1,3){\line(0,1)2}
\put(3,3){\line(0,1)2}
\put(2,7){\line(-1,-2)1}
\put(2,7){\line(1,-2)1}
\put(1.85,.25){$0$}
\put(.3,2.85){$a$}
\put(3.4,2.85){$b'$}
\put(.3,4.85){$b$}
\put(3.4,4.85){$a'$}
\put(1.85,7.4){$1$}
\put(1.2,-.75){{\rm Fig.\ 2}}
\end{picture}
\end{center}
\vspace*{4mm}

Another characterization of orthomodular posets using $\mathbf O_6$ is as follows.

\begin{proposition}
Let $\mathbf P=(P,\leq,{}',0,1)$ be an orthogonal poset. Then $\mathbf P$ is orthomodular if and only if it does not contain an orthogonal subposet isomorphic to $\mathbf O_6$.
\end{proposition}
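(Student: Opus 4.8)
The plan is to prove both implications by contraposition, using Proposition~\ref{prop1} and the unique-complement characterization stated immediately after it.

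\emph{From failure of orthomodularity to a copy of $\mathbf O_6$.} Suppose $\mathbf P$ is not orthomodular. By Proposition~\ref{prop1} there are $x,y\in P$ with $x\leq y$, $y\wedge x'=0$ (that is, $L(y,x')=\{0\}$) and $x\neq y$, so in fact $x<y$. I would take the six elements $0,x,y,y',x',1$ and show they form an orthogonal subposet isomorphic to $\mathbf O_6$. First one checks they are pairwise distinct and that none of $x,y,x',y'$ equals $0$ or $1$; each such coincidence is ruled out by feeding it back into $L(y,x')=\{0\}$ (for instance $x=0$ forces $y=y\wedge1=0$, contradicting $x<y$). The comparabilities $0<x<y<1$ and $0<y'<x'<1$ are immediate from $x<y$ and antitonicity of $'$.

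The heart of this direction is to show that the four remaining pairs are incomparable, so that the induced order is exactly that of $\mathbf O_6$. For $x$ against $x'$ and $y$ against $y'$ I would use the general orthoposet fact that $z$ and $z'$ are incomparable whenever $z\notin\{0,1\}$: indeed $z\leq z'$ gives $z'=z\vee z'=1$ and $z'\leq z$ gives $z=z\vee z'=1$. For $y$ against $x'$, comparability either way would place $y$ or $x'$ into $L(y,x')=\{0\}$, forcing $y=0$ or $x=1$; dually, applying $'$ turns $L(y,x')=\{0\}$ into $U(x,y')=\{1\}$, and comparability of $x$ and $y'$ would then force $y=0$ or $x=1$. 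Finally the set is closed under $'$, and every orthogonal pair inside it has its join equal to $1$ or to one of the listed elements (in particular $x\vee y'=1$ because $U(x,y')=\{1\}$), so it is a genuine orthogonal subposet, and the assignment $x\mapsto a$, $y\mapsto b$, $x'\mapsto a'$, $y'\mapsto b'$ is an isomorphism onto $\mathbf O_6$.

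\emph{From a copy of $\mathbf O_6$ to failure of orthomodularity.} Conversely, let $S\subseteq P$ be an orthogonal subposet isomorphic to $\mathbf O_6$, and let $x,y$ be the images of $a,b$. Then $x\perp y'$ (since $x\leq y$) and $x\vee y'=1$, while also $x\perp x'$ and $x\vee x'=1$, yet $y'\neq x'$ because $y\neq x$. Thus $x$ admits two distinct elements orthogonal to it with join $1$, which by the unique-complement characterization following Proposition~\ref{prop1} is impossible in an orthomodular poset; hence $\mathbf P$ is not orthomodular.

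The step I expect to be most delicate is making sure the abstract configuration really sits inside $\mathbf P$ with the correct relations rather than only inside the isomorphic copy. In the first direction this is the verification of the four incomparabilities, which must be read off the $L$- and $U$-sets computed in $\mathbf P$ itself; in the second direction it is the point that the join $x\vee y'=1$ is the join taken in $\mathbf P$, which is exactly what the notion of an orthogonal subposet supplies, since orthogonal joins of its elements are inherited from $\mathbf P$.
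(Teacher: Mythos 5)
Your proof is correct, and it reaches the paper's conclusion by a somewhat different route. For the direction ``not orthomodular $\Rightarrow$ contains $\mathbf O_6$'', the paper starts from a raw failure of the orthomodular law ($c\leq d$ with $c\vee(d\wedge c')\neq d$) and manufactures the six elements $0,\ c\vee(d\wedge c'),\ d,\ d',\ c'\wedge(d'\vee c),\ 1$, verifying by hand that the new middle element and its complement have the required trivial meet and join with $d$ and $d'$; you instead invoke Proposition~\ref{prop1} to obtain directly a pair $x<y$ with $L(y,x')=\{0\}$ and take $0,x,y,y',x',1$. These are really the same configuration (the paper's $c\vee(d\wedge c')$ is precisely a witness for the failure of the condition in Proposition~\ref{prop1}), but outsourcing that step to the already-proved proposition makes your version shorter and pushes all the work into the routine verification of the four incomparabilities, which you carry out correctly via $L(y,x')=\{0\}$ and its dual $U(x,y')=\{1\}$. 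For the converse, the paper exhibits a direct violation of the orthomodular identity ($a\leq b$ but $a\vee(b\wedge a')=a\neq b$), while you exhibit a violation of unique orthocomplementation ($y'$ and $x'$ are two distinct elements orthogonal to $x$ with join $1$); both rest on the same inherited datum, namely that the join of the orthogonal pair $x,y'$ inside the copy of $\mathbf O_6$ is $1$ in $\mathbf P$, and you are right to flag that this inheritance is exactly what the notion of orthogonal subposet must supply (the paper is in fact slightly more cavalier about this point than you are). In short: same six-element skeleton, but your argument trades the paper's explicit element construction for two previously established characterizations, which is a legitimate and arguably cleaner organization.
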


\begin{proof}
First assume $\mathbf P$ to be orthomodular. If $\mathbf P$ would contain an orthogonal subposet isomorphic to $\mathbf O_6$ then we would have $a\leq b$, but $a\vee(b\wedge a')=a\vee0=a\neq b$ contradicting orthomodularity. Hence $\mathbf P$ does not contain an orthogonal subposet isomorphic to $\mathbf O_6$. Conversely, assume $\mathbf P$ not to contain an orthogonal subposet isomorphic to $\mathbf O_6$. Suppose, $\mathbf P$ is not orthomodular. Then there exist $c,d\in P$ with $c\leq d$ and $c\vee(d\wedge c')\neq d$. Obviously, $0<c\leq c\vee(d\wedge c')<d<1$. Therefore
\begin{align*}
& 0<c\vee(d\wedge c')<d<1, \\
& 0<d'<c'\wedge(d'\vee c)<1.
\end{align*} If $e\leq d,c'\wedge(d'\vee c)$ then $e\leq d\wedge c',d'\vee c$ and hence $e=0$. This shows
\[
d\wedge(c'\wedge(d'\vee c))=0.
\]
Analogously, one can show
\[
(c\vee(d\wedge c'))\vee c'=1.
\]
Altogether, we see that the subset
\[
\{0,c\vee(d\wedge c'),d,d',c'\wedge(d'\vee c),1\}
\]
of $P$ forms an orthogonal subposet of $\mathbf P$ which is isomorphic to $\mathbf O_6$ contradicting our assumption. Hence $\mathbf P$ is orthomodular.
\end{proof}

One can easily see that the orthogonal poset from Example~\ref{ex2} contains an orthogonal subposet isomorphic to $\mathbf O_6$ (e.g.\ $\{0,b,c,f,g,1\}$) and hence it is not orthomodular.

Using the relation $\mathrel\Delta$ we can easily characterize orthomodular posets among orthogonal posets as follows.

\begin{proposition}
Let $\mathbf P=(P,\leq,{}',0,1)$ be an orthogonal poset. Then $\mathbf P$ is orthomodular if and only if
\[
x\leq y\text{ implies }y\mathrel\Delta x
\]
{\rm(}$x,y\in P${\rm)}.
\end{proposition}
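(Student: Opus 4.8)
The plan is to prove the two implications separately, relying on Proposition~\ref{prop1} and on two elementary observations valid whenever $x\leq y$: first, $L(y,x)=L(x)$, and second, $U(L(x))=U(x)$ (since $x$ is the largest element of $L(x)$). Consequently, for $x\leq y$ the set $U(L(y,x),L(y,x'))$ equals $U(x)\cap U(L(y,x'))$, and the relation $y\mathrel\Delta x$ reads $U(y)=U(x)\cap U(L(y,x'))$. I would record at the outset that the inclusion $U(y)\subseteq U(L(y,x),L(y,x'))$ always holds, because both $L(y,x)$ and $L(y,x')$ are contained in $L(y)$; thus only the reverse inclusion is ever at issue.

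For the direction ``$\mathrel\Delta$-condition $\Rightarrow$ orthomodular'' I would argue through the criterion of Proposition~\ref{prop1}: it suffices to show that $x\leq y$ together with $y\wedge x'=0$ forces $x=y$. Assuming $y\wedge x'=0$, i.e.\ $L(y,x')=\{0\}$, we get $U(L(y,x'))=U(0)=P$, so the simplification above gives $U(L(y,x),L(y,x'))=U(x)\cap P=U(x)$. The hypothesis $y\mathrel\Delta x$ then yields $U(y)=U(x)$, and since an element is the least element of its own set of upper bounds, $x=y$ follows. Hence $\mathbf P$ is orthomodular.

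For the converse I would assume orthomodularity and verify the still-missing inclusion $U(L(y,x),L(y,x'))\subseteq U(y)$ for $x\leq y$. The key move is to produce a concrete witness lying in $L(y,x')$. Since $x\leq y$ gives $x\perp y'$, the join $x\vee y'$ is defined, and I would set $m:=(x\vee y')'$; from $x,y'\leq x\vee y'$ one reads off $m\leq x'$ and $m\leq y$, so $m\in L(y,x')$. Now take any $z\in U(L(y,x),L(y,x'))$. Then $z\in U(L(x))=U(x)$ gives $z\geq x$, while $z\in U(L(y,x'))$ gives $z\geq m$. The orthomodular law supplies $x\vee m=x\vee(x\vee y')'=y$, so this join exists and equals $y$; therefore $z\geq x$ and $z\geq m$ force $z\geq y$, that is $z\in U(y)$. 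This establishes $U(y)=U(L(y,x),L(y,x'))$, i.e.\ $y\mathrel\Delta x$.

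I expect the main obstacle to be exactly this reverse inclusion in the orthomodular direction, precisely because joins need not exist in a poset: the argument cannot merely manipulate meets and joins formally but must exhibit the single element $m=(x\vee y')'$ of $L(y,x')$ whose join with $x$ the orthomodular law guarantees to exist and to equal $y$. Everything else reduces to the bookkeeping identities $L(y,x)=L(x)$ and $U(L(x))=U(x)$ and to the fact that $U(a)=U(b)$ implies $a=b$.
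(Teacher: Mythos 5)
Your proof is correct. The direction ``orthomodular $\Rightarrow$ $\Delta$-condition'' is in substance the paper's own argument: your witness $m=(x\vee y')'$ is exactly the element $x'\wedge y$ appearing in the paper's chain of equivalences
\[
b=a\vee(a'\wedge b)\iff U(b)=U(a,a'\wedge b)\iff U(b)=U(L(b,a),L(b,a'))\iff b\mathrel\Delta a,
\]
and your bookkeeping identities $L(y,x)=L(x)$ and $UL(x)=U(x)$ are the same reductions the paper uses silently. Where you genuinely diverge is in the converse: the paper simply reads the displayed chain from bottom to top, so that $y\mathrel\Delta x$ directly yields $y=x\vee(x'\wedge y)$, whereas you detour through Proposition~\ref{prop1}, specializing to the case $y\wedge x'=0$ and concluding $U(y)=U(x)$, hence $x=y$. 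Both routes are legitimate; the paper's is marginally more economical because a single biconditional computation settles both implications at once, while yours makes the logical dependence on the earlier characterization explicit and isolates exactly where the failure of joins in a mere poset could bite (namely, in exhibiting the element $m$ whose join with $x$ the orthomodular law guarantees). No gaps.
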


\begin{proof}
Let $a,b\in P$ and assume $a\leq b$. Then the following are equivalent:
\begin{align*}
   b & =a\vee(a'\wedge b), \\
U(b) & =U(a\vee(a'\wedge b)), \\
U(b) & =U(a,a'\wedge b), \\
U(b) & =U(L(a),L(a'\wedge b)), \\
U(b) & =U(L(b,a),L(b,a')), \\
   b & \mathrel\Delta a.
\end{align*}
\end{proof}

\section{Properties of the generalized commutator and \\
commutation relations}

Several important properties of the relation $\leftrightarrow$ are listed in the next lemma.

\begin{lemma}\label{lem2}
{\rm(}cf.\ {\rm\cite{PP})} Let $(P,\leq,{}',0,1)$ be an orthomodular poset and $a,b\in P$. Then the following hold:
\begin{enumerate}[{\rm(i)}]
\item If $a\leq b$ then $a\leftrightarrow b$,
\item if $a\leftrightarrow b$ then $b\leftrightarrow a$, $a\leftrightarrow b'$, $a'\leftrightarrow b$ and $a'\leftrightarrow b'$,
\item if $a\leftrightarrow b$ then $a\vee b,a\wedge b,a\wedge b',a'\wedge b,a'\wedge b'$ are defined and
\begin{align*}
      a & =(a\wedge b)\vee(a\wedge b'), \\
      b & =(a\wedge b)\vee(a'\wedge b), \\
a\vee b & =(a\wedge b)\vee(a\wedge b')\vee(a'\wedge b).
\end{align*}
\end{enumerate}
\end{lemma}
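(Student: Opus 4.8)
The plan is to work throughout with the witnessing triple from the definition of $\leftrightarrow$ and to reduce everything to the orthomodular law, Proposition~\ref{prop1}, and the De~Morgan law $(x\vee y)'=x'\wedge y'$, valid whenever $x\vee y$ is defined since $'$ is an antitone involution. For (i), if $a\le b$ then $a\perp b'$, so $a\vee b'$ is defined and $a'\wedge b=(a\vee b')'$ exists; the orthomodular law gives $a\vee(a'\wedge b)=b$. Hence the triple $c:=0$, $d:=a$, $e:=a'\wedge b$ is pairwise orthogonal (trivially $0\perp a$ and $0\perp e$, and $a\perp(a'\wedge b)$ since $a'\wedge b\le a'$), with $a=0\vee a=c\vee d$ and $b=a\vee(a'\wedge b)=d\vee e$, so $a\leftrightarrow b$.

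For (ii), fix a witnessing triple $c\perp d\perp e\perp c$ with $a=c\vee d$ and $b=d\vee e$. Symmetry $b\leftrightarrow a$ is immediate from the triple $(e,d,c)$. For $a\leftrightarrow b'$ I would first observe $c\le b'$: from $c\perp d$ and $c\perp e$ we get $c\le d'\wedge e'=(d\vee e)'=b'$. Then the orthomodular law yields $c\vee(c'\wedge b')=b'$, and the triple $(d,c,c'\wedge b')$ witnesses $a\leftrightarrow b'$, since $d\perp c$, $c\perp(c'\wedge b')$, and $(c'\wedge b')\le b'\le d'$ gives $(c'\wedge b')\perp d$. Symmetrically $e\le a'$ produces $a'=e\vee(e'\wedge a')$ and the triple $(e'\wedge a',e,d)$ for $a'\leftrightarrow b$. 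Finally $a'\leftrightarrow b'$ follows by composition: the left-prime implication applied to $a\leftrightarrow b$ gives $a'\leftrightarrow b$, and the right-prime implication applied to $a'\leftrightarrow b$ gives $a'\leftrightarrow b'$.

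For (iii) the plan is to identify the five elements explicitly as $a\wedge b=d$, $a\wedge b'=c$, $a'\wedge b=e$, $a\vee b=c\vee d\vee e=:f$ and $a'\wedge b'=f'$, after which the three displayed identities reduce to $a=d\vee c$, $b=d\vee e$ and $a\vee b=c\vee d\vee e$, all true by construction. The join $f=c\vee d\vee e$ is defined, because $e\perp c$ and $e\perp d$ force $c\vee d\le e'$, so $(c\vee d)\perp e$; and $f$ is the supremum of $\{a,b\}$ since any common upper bound of $a,b$ is an upper bound of $\{c,d,e\}$. The engine for the meets is the auxiliary claim: if $p\perp q$ and $r\le p\vee q$ with $r\le q'$, then $r\le p$. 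I would prove it by showing $(p\vee q)\wedge q'=p$, using $p\le(p\vee q)\wedge q'$ together with $\bigl((p\vee q)\wedge q'\bigr)\wedge p'\le(p\vee q)\wedge(p\vee q)'=0$, so that Proposition~\ref{prop1} forces equality; then $r\le p\vee q$ and $r\le q'$ give $r\le(p\vee q)\wedge q'=p$.

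Granting the auxiliary claim, each meet is computed the same way. For $a\wedge b=d$: clearly $d$ is a lower bound, and any $r\le a,b$ satisfies $r\le b\le c'$ (as $c\le b'$), so applying the claim with $p=d$, $q=c$ to $r\le c\vee d=a$ and $r\le c'$ yields $r\le d$. The cases $a\wedge b'=c$ and $a'\wedge b=e$ are analogous, using $r\le b'\le d'$ respectively $r\le a'\le d'$, while $a'\wedge b'=f'$ follows because $r\le a',b'$ makes $r'$ a common upper bound of $a,b$, whence $f=a\vee b\le r'$, i.e.\ $r\le f'$. The three identities then read off at once. I expect the auxiliary claim, that is, the careful handling of the existence of meets in a bare poset, to be the main obstacle: one cannot manipulate meets freely, and the point that makes it go through is reducing each meet to Proposition~\ref{prop1} and the De~Morgan laws.
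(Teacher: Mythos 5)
The paper does not actually prove Lemma~\ref{lem2}: it is quoted from Pt\'ak and Pulmannov\'a with a ``cf.''\ reference and no argument is given, so there is no in-paper proof to compare against. Your proof is correct and self-contained. Parts (i) and (ii) are right: the triples $(0,a,a'\wedge b)$, $(e,d,c)$, $(d,c,c'\wedge b')$ and $(e'\wedge a',e,d)$ all satisfy the required orthogonality and join conditions, and chaining the two priming implications legitimately yields $a'\leftrightarrow b'$. In (iii) the identifications $a\wedge b=d$, $a\wedge b'=c$, $a'\wedge b=e$, $a\vee b=c\vee d\vee e$ and $a'\wedge b'=(a\vee b)'$ all check out, and reducing each meet to Proposition~\ref{prop1} is exactly the right engine.

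The one step you should make explicit is the \emph{existence} of the meet $(p\vee q)\wedge q'$ in your auxiliary claim before you compute with it: in a bare poset this is not automatic. It does exist here, because $q\le p\vee q$ means $q\perp(p\vee q)'$, so $q\vee(p\vee q)'$ is defined in an orthogonal poset, and since $'$ is an antitone involution its orthocomplement is precisely the infimum of $q'$ and $p\vee q$. The same caution applies to the expression $\bigl((p\vee q)\wedge q'\bigr)\wedge p'$, which should be read as the assertion that the only common lower bound of $(p\vee q)\wedge q'$ and $p'$ is $0$ (any such lower bound lies below $p\vee q$ and below $p'\wedge q'=(p\vee q)'$); that is exactly the hypothesis Proposition~\ref{prop1} needs. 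With these two sentences added, the argument is complete.
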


The following properties of the relation $\mathrel\Delta$ and the operator $d$ will be used in the sequel.

\begin{lemma}\label{lem3}
Let $(P,\leq,{}',0,1)$ be an orthoposet and $a,b\in P$. Then the following hold:
\begin{enumerate}[{\rm(i)}]
\item If $a\leq b$ then $a\mathrel\Delta b$,
\item $a\mathrel\Delta0$, $a\mathrel\Delta a'$ and $1\mathrel\Delta a$,
\item $a\mathrel\Delta b$ if and only if $a\mathrel\Delta b'$,
\item $d(a,b)=d(b,a)=d(a,b')=d(a',b)=d(a',b')$,
\item if $a\mathrel\Delta b$ and $a'\mathrel\Delta b$ then $d(a,b)=\{1\}$,
\item $d(a,0)=d(a,a)=d(a,a')=d(a,1)=\{1\}$.
\end{enumerate}
\end{lemma}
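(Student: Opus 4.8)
The plan is to reduce every clause to elementary manipulations of the polars $L$ and $U$, using only three facts valid in any bounded poset equipped with this polarity: that $U$ is order-reversing and sends unions to intersections, so that $U(X,Y)=U(X\cup Y)=U(X)\cap U(Y)$; that $U(L(a))=U(a)$ while $a\le b$ forces $L(a)\subseteq L(b)$; and that $U(a)\cap U(a')=U(a\vee a')=U(1)=\{1\}$, where the middle equality is licensed because the complementation axiom guarantees that the join $a\vee a'=1$ exists. I would prove the items in the order (i), (iii), (iv), (ii), (v), (vi), since the later ones rest on the earlier ones.

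For (i), assuming $a\le b$ gives $L(a,b)=L(a)\cap L(b)=L(a)$ and $L(a,b')\subseteq L(a)$, hence $L(a,b)\cup L(a,b')=L(a)$ and therefore $U(L(a,b),L(a,b'))=U(L(a))=U(a)$, which is precisely $a\mathrel\Delta b$. For (iii) I would simply observe that passing from $b$ to $b'$ leaves the defining condition $U(a)=U(L(a,b),L(a,b'))$ unchanged, since $b''=b$ merely interchanges the two sets $L(a,b)$ and $L(a,b')$. The same idea handles (iv): the four-set family $\{L(a,b),L(a,b'),L(a',b),L(a',b')\}$ is permuted (and hence its union is fixed) by each of the substitutions $a\leftrightarrow b$, $a\mapsto a'$ and $b\mapsto b'$, using $x''=x$ and the fact that $L$ is insensitive to the order of its arguments; applying $U$ to the union then shows $d(a,b)=d(b,a)=d(a,b')=d(a',b)=d(a',b')$.

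The one genuine computation is $1\mathrel\Delta a$ in (ii); it cannot be read off from (i), because $\mathrel\Delta$ need not be symmetric and (i) only delivers $a\mathrel\Delta 1$. Here $L(1,a)=L(a)$ and $L(1,a')=L(a')$, so $U(L(1,a),L(1,a'))=U(a)\cap U(a')=\{1\}=U(1)$, as required. The remaining assertions of (ii) are then formal: $a\le 1$ gives $a\mathrel\Delta 1$ by (i), whence $a\mathrel\Delta 0$ by (iii); and $a\le a$ gives $a\mathrel\Delta a$ by (i), whence $a\mathrel\Delta a'$ by (iii).

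For (v) I would split the four-fold union into two halves,
\[
d(a,b)=U\bigl(L(a,b),L(a,b')\bigr)\cap U\bigl(L(a',b),L(a',b')\bigr),
\]
and use the hypotheses $a\mathrel\Delta b$ and $a'\mathrel\Delta b$ to rewrite the two factors as $U(a)$ and $U(a')$ respectively; their intersection is $\{1\}$. Finally (vi) follows from (ii), (iv) and (v): by (ii) both $a\mathrel\Delta 0$ and $a'\mathrel\Delta 0$ hold, so (v) yields $d(a,0)=\{1\}$, and likewise $a\mathrel\Delta a$ together with $a'\mathrel\Delta a$ (the latter being the clause $a\mathrel\Delta a'$ of (ii) applied to $a'$) yields $d(a,a)=\{1\}$; the identities $d(a,1)=d(a,0)$ and $d(a,a')=d(a,a)$ are then immediate from (iv). I do not expect a serious obstacle at any point; the only spots demanding care are the direction-sensitivity of $\mathrel\Delta$, which is what forces the direct computation of $1\mathrel\Delta a$, and checking that each invocation of $U(X\cup Y)=U(X)\cap U(Y)$ and of the existence of $a\vee a'$ is legitimate in the purely order-theoretic setting of an orthoposet.
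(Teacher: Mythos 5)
Your proposal is correct and follows essentially the same route as the paper's proof: item (i) via $L(a,b)=L(a)$ and $L(a,b')\subseteq L(a)$, item (v) by splitting the four-fold union into the two halves $U(L(a,b),L(a,b'))\cap U(L(a',b),L(a',b'))=U(a)\cap U(a')=\{1\}$, and item (vi) from the earlier parts. The only (harmless) variation is that you derive $a\mathrel\Delta 0$ and $a\mathrel\Delta a'$ from (i) and (iii) instead of computing them directly as the paper does, and you spell out the symmetry arguments for (iii) and (iv) that the paper dismisses as clear.
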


\begin{proof}
\
\begin{enumerate}
\item[(i)] If $a\leq b$ then
\[
U(a)=UL(a)=U(L(a),L(a,b'))=U(L(a,b),L(a,b')).
\]
\item[(ii)] We have
\begin{align*}
U(a) & =U(0,a)=U(0,L(a))=U(L(a,0),L(a,0')), \\
U(a) & =U(0,a)=U(0,L(a))=U(L(a,a'),L(a,a''), \\
U(1) & =U(a,a')=U(L(a),L(a'))=U(L(1,a),L(1,a')).
\end{align*}
\item[(iii)] and (iv) are clear.
\item[(v)] If $a\mathrel\Delta b$ and $a'\mathrel\Delta b$ then
\begin{align*}
d(a,b) & =U(L(a,b),L(a,b'),L(a',b),L(a',b'))= \\
       & =U(L(a,b),L(a,b'))\cap U(L(a',b),L(a',b'))=U(a)\cap U(a')=U(a,a')= \\
			 & =\{1\}.
\end{align*}
\item[(vi)] This follows from (i), (ii) and (v).
\end{enumerate}
\end{proof}

In the next lemma we show connections among the relations $\mathrel\Delta$, $\leftrightarrow$ and the operator $d$.

\begin{lemma}\label{lem4}
Let $(P,\leq,{}',0,1)$ be an orthomodular poset and $a,b\in P$. Then the following hold:
\begin{enumerate}[{\rm(i)}]
\item if $a\leftrightarrow b$ then $a\mathrel\Delta b$, $b\mathrel\Delta a$ and $d(a,b)=\{1\}$,
\item if $a\leq b$ then $a\mathrel\Delta b$, $b\mathrel\Delta a$ and $d(a,b)=\{1\}$.
\end{enumerate}
\end{lemma}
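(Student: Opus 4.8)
The plan is to reduce both parts to a single computation and to exploit the decompositions furnished by Lemma~\ref{lem2}. First I would establish $a\mathrel\Delta b$ under the hypothesis $a\leftrightarrow b$. By Lemma~\ref{lem2}(iii) the meets $a\wedge b$ and $a\wedge b'$ exist and satisfy $a=(a\wedge b)\vee(a\wedge b')$. Since $a\wedge b$ is by definition the greatest element of $L(a,b)$, we have $L(a,b)=L(a\wedge b)$ and likewise $L(a,b')=L(a\wedge b')$; hence
\[
U(L(a,b),L(a,b'))=U(L(a\wedge b),L(a\wedge b'))=U(a\wedge b,a\wedge b').
\]
Because the join $(a\wedge b)\vee(a\wedge b')$ exists and equals $a$, the set of common upper bounds of $a\wedge b$ and $a\wedge b'$ is exactly $U(a)$, so $U(L(a,b),L(a,b'))=U(a)$, which is precisely the assertion $a\mathrel\Delta b$.

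Next I would obtain the remaining two statements of part~(i) by symmetry. Lemma~\ref{lem2}(ii) gives $b\leftrightarrow a$, so the computation above, with the roles of $a$ and $b$ interchanged, yields $b\mathrel\Delta a$. For the commutator, Lemma~\ref{lem2}(ii) also gives $a'\leftrightarrow b$, whence the same argument applied to the pair $(a',b)$ produces $a'\mathrel\Delta b$. Having both $a\mathrel\Delta b$ and $a'\mathrel\Delta b$, I would invoke Lemma~\ref{lem3}(v) to conclude $d(a,b)=\{1\}$, completing part~(i).

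Part~(ii) then follows immediately: by Lemma~\ref{lem2}(i) the hypothesis $a\leq b$ already entails $a\leftrightarrow b$, so all three conclusions are special cases of part~(i).

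I expect the only genuine step to be the translation carried out in the first paragraph, namely turning the set-theoretic condition $U(a)=U(L(a,b),L(a,b'))$ into the lattice-style identity $a=(a\wedge b)\vee(a\wedge b')$. The two facts used there, that $L(a,b)=L(a\wedge b)$ whenever the meet exists and that $U(u,v)=U(u\vee v)$ whenever the join exists, are elementary properties of posets, but they are exactly what makes Lemma~\ref{lem2}(iii) applicable; everything else is bookkeeping with the symmetry clauses of Lemmas~\ref{lem2} and~\ref{lem3}.
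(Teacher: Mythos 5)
Your proposal is correct and follows essentially the same route as the paper: the identity $U(a)=U((a\wedge b)\vee(a\wedge b'))=U(a\wedge b,a\wedge b')=U(L(a,b),L(a,b'))$ from Lemma~\ref{lem2}(iii), then the symmetry clauses of Lemma~\ref{lem2}(ii) together with Lemma~\ref{lem3}(v) for $b\mathrel\Delta a$ and $d(a,b)=\{1\}$, and Lemma~\ref{lem2}(i) to reduce part~(ii) to part~(i). You merely spell out the intermediate poset-theoretic facts ($L(a,b)=L(a\wedge b)$, $U(u,v)=U(u\vee v)$) that the paper leaves implicit.
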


\begin{proof}
\
\begin{enumerate}[(i)]
\item Assume $a\leftrightarrow b$. Then
\[
U(a)=U((a\wedge b)\vee(a\wedge b'))=U(a\wedge b,a\wedge b')=U(L(a,b),L(a,b'))
\]
and hence $a\mathrel\Delta b$. The rest follows from (ii) of Lemma~\ref{lem2} and (v) of Lemma~\ref{lem3}.
\item This follows from (i) of Lemma~\ref{lem2} and from (i).
\end{enumerate}
\end{proof}

In Theorem~\ref{th1} we show that the converse of (ii) does not hold in general.

\begin{lemma}\label{lem1}
Let $(P,\leq,{}',0,1)$ be an orthomodular poset and $a,b\in P$ and assume $a\mathrel{{\rm C}}b$ and that $a\vee b$ is defined. Then
\begin{enumerate}[{\rm(i)}]
\item $a\vee b=(a\wedge b')\vee b$,
\item if $(b\wedge a)\vee(b\wedge a')$ and $a\vee b'$ are defined then $b\mathrel{{\rm C}}a$ and $a\vee b=a\vee(b\wedge a')$.
\end{enumerate}
\end{lemma}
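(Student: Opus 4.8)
The plan is to handle the two claims in turn, reducing the second to the first and isolating the symmetry of $\mathrel{{\rm C}}$ as the real work.

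For (i), I would first record that $(a\wedge b')\vee b$ is defined: since $a\wedge b'\leq b'$ we have $a\wedge b'\perp b$, and $\mathbf P$ is orthogonal. Both $a\wedge b'$ and $b$ lie below $a\vee b$, so $(a\wedge b')\vee b\leq a\vee b$. For the reverse inequality I would use the hypothesis $a=(a\wedge b)\vee(a\wedge b')$: since $a\wedge b\leq b\leq(a\wedge b')\vee b$ and $a\wedge b'\leq(a\wedge b')\vee b$, the element $(a\wedge b')\vee b$ is a common upper bound of $a\wedge b$ and $a\wedge b'$, whence $a=(a\wedge b)\vee(a\wedge b')\leq(a\wedge b')\vee b$; together with $b\leq(a\wedge b')\vee b$ this gives $a\vee b\leq(a\wedge b')\vee b$, and equality follows. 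Equivalently, one compares the upper-cone equalities $U(a\vee b)=U(a)\cap U(b)$ and $U((a\wedge b')\vee b)=U(a\wedge b')\cap U(b)$.

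For (ii), I would observe that the second assertion is a consequence of the first together with (i). Indeed, once $b\mathrel{{\rm C}}a$ is established, applying (i) with the roles of $a$ and $b$ interchanged (legitimate since $b\vee a=a\vee b$ is defined) yields $a\vee b=b\vee a=(b\wedge a')\vee a=a\vee(b\wedge a')$. So everything reduces to proving the symmetry $b\mathrel{{\rm C}}a$, i.e.\ $b=(b\wedge a)\vee(b\wedge a')$, the right-hand join existing by hypothesis. The inequality $(b\wedge a)\vee(b\wedge a')\leq b$ is immediate, so writing $r:=(b\wedge a)\vee(b\wedge a')$ it remains to show $r=b$. Here orthomodularity enters: since $r\leq b$, Proposition~\ref{prop1} (or directly the orthomodular law $b=r\vee(b\wedge r')$) reduces the claim to $b\wedge r'=0$. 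Using De~Morgan (all the meets and joins involved exist by the standing hypotheses) I would compute $r'=(b\wedge a)'\wedge(b\wedge a')'=(a'\vee b')\wedge(a\vee b')$, so $w:=b\wedge r'$ satisfies $w\leq b$, $w\leq a'\vee b'$ and $w\leq a\vee b'$. The crux is then two meet identities: $b\wedge(a'\vee b')=a'\wedge b$ and $b\wedge(a\vee b')=a\wedge b$. The first is obtained by complementing the equation $a\vee b'=(a\wedge b)\vee b'$, which is exactly (i) applied to the pair $(a,b')$ (valid because $a\mathrel{{\rm C}}b$ is equivalent to $a\mathrel{{\rm C}}b'$ and $a\vee b'$ is defined). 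The second follows from the same equation together with the general fact that $b\wedge(p\vee b')=p$ whenever $p\leq b$ in an orthomodular poset (a short consequence of Proposition~\ref{prop1}). Feeding these in, $w\leq b\wedge(a'\vee b')=a'\wedge b\leq a'$ and $w\leq b\wedge(a\vee b')=a\wedge b\leq a$, so $w\leq a\wedge a'=0$. Hence $b\wedge r'=0$, giving $r=b$ and thus $b\mathrel{{\rm C}}a$.

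The main obstacle is precisely this symmetry step. In a lattice all meets and joins exist and $\mathrel{{\rm C}}$ is classically symmetric, but in a poset one must supply by hand the existence of $(b\wedge a)\vee(b\wedge a')$ and of $a\vee b'$ — which is why they appear as hypotheses — and then route the argument through orthomodularity and the two meet identities rather than through distributivity. I expect verifying the auxiliary identity $b\wedge(p\vee b')=p$ and checking that every De~Morgan step is licensed by an actually existing operation to be the most delicate bookkeeping.
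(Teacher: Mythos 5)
Your proof is correct, and both part (i) and the overall architecture of (ii) --- reduce everything to the symmetry claim $b\mathrel{{\rm C}}a$, apply (i) to the pair $(a,b')$ to obtain $a\vee b'=(a\wedge b)\vee b'$, then invoke orthomodularity on $r:=(b\wedge a)\vee(b\wedge a')\leq b$ --- match the paper. Where you diverge is in how orthomodularity is brought to bear. The paper runs the computation on the join side: it applies the orthomodular law in the form $b=r\vee(r\vee b')'$ and shows in one line that $r\vee b'=(a\vee b')\vee(b\wedge a')=(a\vee b')\vee(a\vee b')'=1$, so the correcting term vanishes. You work on the meet side: you show $b\wedge r'=0$ by proving that every common lower bound of $b$ and $r'$ lies below both $a$ and $a'$, and then conclude via Proposition~\ref{prop1}. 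The two routes are dual and both valid, but the paper's is shorter because the single identity $b\wedge a'=(a\vee b')'$ does all the work at once, whereas yours needs two meet identities plus the auxiliary fact $b\wedge(p\vee b')=p$ for $p\leq b$ (which is indeed just the dual orthomodular law, as you say). One caution in your write-up: rewriting $(b\wedge a)'$ as $a'\vee b'$ is not licensed, since the existence of the join $a'\vee b'$ is not among the hypotheses; what you actually use --- and correctly justify by complementing the existing join $a\vee b'=(a\wedge b)\vee b'$ --- is the identity $b\wedge(a\wedge b)'=a'\wedge b$, so the argument survives, but that De~Morgan step should be stated in terms of $(a\wedge b)'$ rather than $a'\vee b'$.
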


\begin{proof}
\
\begin{enumerate}[(i)]
\item Because of $a\mathrel{{\rm C}}b$ we have $a=(a\wedge b)\vee(a\wedge b')$ and hence $a\vee b=(a\wedge b)\vee(a\wedge b')\vee b=(a\wedge b')\vee b$.
\item Assume that $(b\wedge a)\vee(b\wedge a')$ and $a\vee b'$ are defined. Now $a\mathrel{{\rm C}}b$ implies $a\mathrel{{\rm C}}b'$. Since $a\vee b'$ is defined we have $a\vee b'=(a\wedge b)\vee b'$ by (i). Now $(b\wedge a)\vee(b\wedge a')\leq b$. According to orthomodularity we obtain
\begin{align*}
b & =(b\wedge a)\vee(b\wedge a')\vee((b\wedge a)\vee(b\wedge a')\vee b')'= \\
  & =(b\wedge a)\vee(b\wedge a')\vee((a\vee b')\vee(b\wedge a'))'=(b\wedge a)\vee(b\wedge a')\vee1'= \\
  & =(b\wedge a)\vee(b\wedge a')\vee0=(b\wedge a)\vee(b\wedge a'),
\end{align*}
i.e.\ $b\mathrel{{\rm C}}a$. Hence
\[
a\vee b=a\vee(b\wedge a)\vee(b\wedge a')=a\vee(b\wedge a').
\]
\end{enumerate}
\end{proof}

If $a\mathrel\Delta b$ is assumed instead of $a\mathrel{{\rm C}}b$, we can modify (i) of Lemma~\ref{lem1} as follows.

\begin{lemma}
Let $(P,\leq,{}',0,1)$ be an orthomodular poset and $a,b\in P$ and assume $a\mathrel{\Delta}b$. Then $U(a,b)=U(L(a,b'),b)$.
\end{lemma}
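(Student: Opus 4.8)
The plan is to prove this as the poset analogue of Lemma~\ref{lem1}(i): there the commutation $a\mathrel{{\rm C}}b$ turns the join $a\vee b$ into $(a\wedge b')\vee b$, and here the relation $a\mathrel\Delta b$ should turn the set of upper bounds $U(a,b)$ into $U(L(a,b'),b)$. The whole argument will be carried out in terms of upper and lower cones, and I expect no use of orthomodularity beyond the definition of $\mathrel\Delta$ and elementary properties of $L$ and $U$.

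First I would unfold both sides using $U(S\cup T)=U(S)\cap U(T)$. This gives
\[
U(a,b)=U(a)\cap U(b),\qquad U(L(a,b'),b)=UL(a,b')\cap U(b),
\]
so it suffices to compare $U(a)$ with $UL(a,b')$ after intersecting with $U(b)$. Next I would rewrite the hypothesis: by definition $a\mathrel\Delta b$ means
\[
U(a)=U(L(a,b),L(a,b'))=UL(a,b)\cap UL(a,b').
\]

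The key step --- the one corresponding to ``$a\wedge b\leq b$'' in the lattice proof, but now purely order-theoretic --- is the inclusion $U(b)\subseteq UL(a,b)$. Indeed every element of $L(a,b)$ lies below $b$, so any upper bound of $b$ is automatically an upper bound of $L(a,b)$. This is where the asymmetry between the roles of $b$ and $b'$ is exploited, and I expect it to be the only observation requiring a moment's thought.

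Both inclusions then follow mechanically. For $U(a,b)\subseteq U(L(a,b'),b)$, any $x\in U(a)\cap U(b)$ lies in $UL(a,b')$ by the rewritten hypothesis, hence in $UL(a,b')\cap U(b)$. Conversely, for $x\in UL(a,b')\cap U(b)$, the key inclusion gives $x\in UL(a,b)$, so $x\in UL(a,b)\cap UL(a,b')=U(a)$; together with $x\in U(b)$ this yields $x\in U(a)\cap U(b)=U(a,b)$, completing the equality.
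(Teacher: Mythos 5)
Your proposal is correct and follows essentially the same route as the paper: unfold $U(a,b)=U(a)\cap U(b)$, substitute $U(a)=UL(a,b)\cap UL(a,b')$ from the definition of $a\mathrel\Delta b$, and absorb the factor $UL(a,b)$ using the inclusion $U(b)\subseteq UL(a,b)$. The paper writes this as a single chain of set equalities rather than two inclusions, but the content is identical.
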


\begin{proof}
We have
\begin{align*}
U(a,b) & =U(a)\cap U(b)=U(L(a,b),L(a,b'))\cap U(b)=UL(a,b)\cap UL(a,b')\cap U(b)= \\
       & =UL(a,b')\cap U(b)=U(L(a,b'),b).
\end{align*}
\end{proof}

\section{Sublattices of orthomodular posets of subsets of a finite set}

We now introduce a particular orthomodular poset whose elements are special subsets of a given $n$-element set as follows:

\begin{definition}
Let $n,k$ be positive integers with $k\mid n$ and put
\begin{align*}
             N & :=\{1,\ldots,n\}, \\
        P_{nk} & :=\{A\subseteq N\mid k\text{ divides }|A|\}, \\
            A' & :=N\setminus A\text{ for all }A\in P_{nk}, \\
\mathbf P_{nk} &:=(P_{nk},\subseteq,{}',\emptyset,N)
\end{align*}
\end{definition}

It is evident that $\mathbf P_{n1}$ is just the Boolean algebra $\mathbf2^n$ of all subsets of the $n$-element set $N$. It is also clear that every orthomodular poset $\mathbf P_{nk}$ can be embedded as a poset into the Boolean algebra $\mathbf2^n$. Altogether, it seems that it is more suitable to investigate these orthomodular posets than the general case.

It should be noted that the orthomodular poset $\mathbf P_{n2}$ was already introduced in \cite{PP} under the name $\mathbf P_{{\rm even}}$.

It is easy to see that if $\{A_1,\ldots,A_{n/k}\}$ is a decomposition on $N$ into $k$-element subsets then
\[
\{\bigcup_{i\in I}A_i\mid I\subseteq\{1,\ldots,n/k\}\}
\]
forms a maximal Boolean subalgebra of $\mathbf P_{nk}$ isomorphic to $\mathbf2^{n/k}$.

We are now going to describe several important properties of the orthomodular poset $\mathbf P_{nk}$.

\begin{theorem}\label{th1}
Let $n,k$ be positive integers with $k\mid n$ and $A,B,C\in P_{nk}$ and put $N:=\{1,\ldots,n\}$. Then
\begin{enumerate}[{\rm(i)}]
\item $\mathbf P_{nk}$ is an orthomodular poset,
\item $A\perp B$ if and only if $A\cap B=\emptyset$,
\item $A\leftrightarrow B$ if and only if $A\cap B\in P_{nk}$,
\item $\mathbf P_{nk}$ is an orthomodular lattice if and only if $k=1$ {\rm(}then it is a Boolean algebra{\rm)} or $n/k\leq2$,
\item if $|A\cap B|<k$ then $UL(A,B)=P_{nk}$,
\item if $|A\cap B|\geq k$ then $C\in UL(A,B)$ if and only if $C\supseteq A\cap B$,
\item if $|A\cap B|,|A\cap B'|\geq k$ then $A\mathrel\Delta B$,
\item if $|A\cap B|,|A\cap B'|,|A'\cap B|,|A'\cap B'|\geq k$ then $d(A,B)=\{N\}$,
\item if $n\geq4k$ and $k>1$ then there exist $D,E\in P_{nk}$ with $D\mathrel\Delta E$ and $E\not\mathrel\Delta D$,
\item if $n\geq6k$ and $k>1$ then there exist $D,E\in P_{nk}$ with $D\not\leftrightarrow E$ and $d(D,E)=\{N\}$,
\item if $|A|=k$ then $A\mathrel\Delta B$ if and only if $A\leftrightarrow B$.
\end{enumerate}
\end{theorem}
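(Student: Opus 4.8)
The plan is to prove the two implications separately. The forward direction, $A\leftrightarrow B\Rightarrow A\mathrel\Delta B$, requires no work: it is exactly the content of (i) of Lemma~\ref{lem4}, which already holds in every orthomodular poset. So the whole point of the hypothesis $|A|=k$ lies in the converse, which I would establish by computing the relation $\mathrel\Delta$ explicitly inside $\mathbf P_{nk}$ and matching it against the criterion for $\leftrightarrow$ furnished by part (iii).

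First I would record that, since lower bounds of two sets are the subsets of their intersection, $L(A,B)=\{X\in P_{nk}\mid X\subseteq A\cap B\}$ and $L(A,B')=\{X\in P_{nk}\mid X\subseteq A\cap B'\}$. Writing $S:=A\cap B$ and $T:=A\cap B'=A\setminus B$, we have $S\dotcup T=A$ and hence $|S|+|T|=|A|=k$. The crucial collapse now comes from $|A|=k$: any member of $P_{nk}$ contained in $S$ has cardinality a multiple of $k$ that is at most $|S|\le k$, so it is $\emptyset$, or (only when $|S|=k$, forcing $S=A$) the set $A$ itself. Thus $L(A,B)=\{\emptyset\}$ when $|S|<k$ and $L(A,B)=\{\emptyset,A\}$ when $|S|=k$, and likewise for $L(A,B')$ with $T$ in place of $S$.

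With this, the computation of $U(L(A,B),L(A,B'))$ splits into three exhaustive, mutually exclusive cases governed by $|S|+|T|=k$: either $|S|=k$ (so $A\subseteq B$ and $T=\emptyset$), or $|T|=k$ (so $A\cap B=\emptyset$, i.e.\ $A\perp B$, and $S=\emptyset$), or both $|S|$ and $|T|$ lie strictly between $0$ and $k$ (which can occur only for $k\ge2$). In the first two cases one of the two lower-bound sets equals $\{\emptyset,A\}$ and the other $\{\emptyset\}$, so the set of common upper bounds is $U(\{\emptyset,A\})=U(A)$, giving $A\mathrel\Delta B$. In the third case both lower-bound sets equal $\{\emptyset\}$, whence $U(L(A,B),L(A,B'))=U(\{\emptyset\})=P_{nk}$; since $A\ne\emptyset$ we have $U(A)\ne P_{nk}$, so $A\mathrel\Delta B$ fails. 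Therefore $A\mathrel\Delta B$ holds precisely when $A\cap B\in\{\emptyset,A\}$, equivalently when $A\cap B\in P_{nk}$.

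Finally I would invoke part (iii) of the theorem, $A\leftrightarrow B$ iff $A\cap B\in P_{nk}$, to conclude $A\mathrel\Delta B\Leftrightarrow A\cap B\in P_{nk}\Leftrightarrow A\leftrightarrow B$, which is the desired equivalence. I do not expect a genuine obstacle; the only delicate point is the bookkeeping showing that $|A|=k$ forces $L(A,B)$ and $L(A,B')$ to contain at most the two elements $\emptyset$ and $A$, for it is exactly this collapse that makes $\mathrel\Delta$ computable. As a sanity check, when $k=1$ the third case is vacuous and the statement degenerates to the Boolean situation in $\mathbf P_{n1}=\mathbf 2^n$.
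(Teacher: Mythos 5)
Your proposal addresses only part (xi) of the eleven-part theorem; parts (i)--(x) are left unproved, so as a proof of the full statement it is incomplete. For part (xi) itself, however, your argument is correct and essentially the paper's: both rest on the observation that $|A|=k$ collapses $L(A,B)$ and $L(A,B')$ to $\{\emptyset\}$ unless $A\cap B\in\{\emptyset,A\}$, so that $U(L(A,B),L(A,B'))$ becomes all of $P_{nk}$ while $U(A)$ is proper, and both dispatch the forward direction by Lemma~\ref{lem4} and conclude via part (iii). The only cosmetic difference is that you run the three cases explicitly, whereas the paper argues by contraposition and cites part (v).
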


\begin{proof}
\
\begin{enumerate}
\item[(ii)] The following are equivalent: $A\perp B$, $A\subseteq B'$ and $A\cap B=\emptyset$.
\item[(i)] Obviously, $\mathbf P_{nk}$ is an orthoposet. If $A\perp B$ then $A\cap B=\emptyset$ as shown above and hence $A\cup B\in P_{nk}$ which is clearly the supremum $A\vee B$ of $A$ and $B$ in $\mathbf P_{nk}$. This shows that $\mathbf P_{nk}$ is an orthogonal poset. Finally, $A\subseteq B$ implies
\[
A\vee(A\vee B')'=A\cup(A\cup B')'=A\cup(A'\cap B)=(A\cup A')\cap(A\cup B)=N\cap B=B
\]
showing that $\mathbf P_{nk}$ is an orthomodular poset.
\item[(iii)] Assume $A\leftrightarrow B$. Then there exist pairwise disjoint $C,D,E\in P_{nk}$ with $C\cup D=A$ and $D\cup E=B$. Obviously, $D\subseteq A\cap B$. Conversely, assume $a\in A\cap B$. Then $a\notin D$ would imply $a\in C\cap E$ contradicting $C\cap E=\emptyset$. Hence $a\in D$. This shows $D=A\cap B$ and hence $A\cap B=D\in P_{nk}$. The converse implication is clear.
\item[(iv)] We have $P_{n1}=2^N$ and $P_{nn}=\{\emptyset,N\}$, if $n/k=2$ then $P_{nk}=\{0,N\}\cup\{A\subseteq N\mid k=|A|\}$ and if $k>1$ and $n/k>2$ then $\{1,\ldots,2k\}$ and $\{2,\ldots,2k+1\}$ are different minimal upper bounds of $\{2,\ldots,k+1\}$ and $\{3,\ldots,k+2\}$.
\item[(v)] If $|A\cap B|<k$ then $UL(A,B)=U(\{\emptyset\})=P_{nk}$.
\item[(vi)] If $|A\cap B|\geq k$, $C\in UL(A,B)$ and $a\in A\cap B$ then there exists some $D\in L(A,B)$ with $a\in D$ and since $C\supseteq D$, we obtain $a\in C$ showing $A\cap B\subseteq C$.
\item[(vii)] if $|A\cap B|,|A\cap B'|\geq k$ then by (vi) the following are equivalent:
\begin{align*}
C & \in U(A), \\
C & \supseteq A, \\
C & \supseteq(A\cap B)\cup(A\cap B'), \\
C & \supseteq A\cap B\text{ and }C\supseteq A\cap B', \\
C & \in UL(A,B)\cap UL(A,B'), \\
C & \in U(L(A,B),L(A,B'))
\end{align*}
and hence $U(A)=U(L(A,B),L(A,B'))$, i.e.\ $A\mathrel\Delta B$.
\item[(viii)] This follows from (vii) and from (iv) of Lemma~\ref{lem3}.
\item[(ix)] Assume $n\geq4k$ and $k>1$ and put $D:=\{1,\ldots,3d\}$ and $E:=\{2d,\ldots,4d-1\}$. Then
\begin{align*}
 |D\cap E| & =k+1>k, \\
|D\cap E'| & =2k-1>k, \\
|D'\cap E| & =k-1<k
\end{align*}
and hence $D\mathrel\Delta E$ by (iv). Put $F:=\{k+1,\ldots,3k\}$. Since $F\supseteq E\cap D$ we have $F\in UL(E,D)$ and since $|E\cap D'|<k$ we have $UL(E,D')=P_{nk}$ by (v). Together we obtain $F\in UL(E,D)\cap UL(E,D')=U(L(E,D),L(E,D'))$. But, because of $k>1$ we have $3k<4k-1$ and hence $4k-1\in E\setminus F$ which shows $F\notin U(E)$. Together we obtain $U(E)\neq U(L(E,D),L(E,D'))$, i.e.\ $E\not\mathrel\Delta D$.
\item[(x)] If $n\geq6k$, $D:=\{1,\ldots,3d\}$ and $E:=\{2d,\ldots,5d-1\}$ then
\begin{align*}
  |D\cap E| & =k+1>k, \\
 |D\cap E'| & =|D'\cap E|=2k-1>k, \\
|D'\cap E'| & =n-(k+1)-(4k-2)=n-5k+1>k
\end{align*}
and hence $D\not\leftrightarrow E$ by (iii) and $d(D,E)=\{N\}$ by (viii).
\item[(xi)] Assume $A\not\leftrightarrow B$. Then $A\not\subseteq B$ and $A\not\subseteq B'$ by Lemma~\ref{lem2}. Hence $0<|A\cap B|,|A\cap B'|<d$ and $B\notin U(A)$. But
\[
B\in P_{nk}\cap P_{nk}=UL(A,B)\cap UL(A,B')=U(L(A,B),L(A,B'))
\]
by (v) showing $U(A)\neq U(L(A,B),L(A,B'))$, i.e.\ $A\not\mathrel\Delta B$. The converse implication follows from Lemma~\ref{lem4}.
\end{enumerate}
\end{proof}

\begin{example}\label{ex1}
Put $N:=\{1,\ldots,6\}$ and
\[
P:=\{A\subseteq N\mid|A\cap\{1,2,3\}|=|A\cap\{4,5,6\}|\}.
\]
Then $\mathbf P=(P,\subseteq,{}',\emptyset,N)$ is a twenty-element orthomodular poset which is not a lattice since
\[
\{1,4\},\{1,5\},\{1,2,4,5\},\{1,3,4,5\}\in P
\]
and $\{1,2,4,5\}$ and $\{1,3,4,5\}$ are different minimal upper bounds of $\{1,4\}$ and $\{1,5\}$. It is the smallest orthomodular subposet of the orthomodular poset $\mathbf P_{62}$ {\rm(}see Proposition~\ref{prop2}{\rm)} and it is depicted in Fig.~3:
\vspace*{-2mm}
\begin{center}
\setlength{\unitlength}{7mm}
\begin{picture}(18,8)
\put(9,1){\circle*{.3}}
\put(1,3){\circle*{.3}}
\put(3,3){\circle*{.3}}
\put(5,3){\circle*{.3}}
\put(7,3){\circle*{.3}}
\put(9,3){\circle*{.3}}
\put(11,3){\circle*{.3}}
\put(13,3){\circle*{.3}}
\put(15,3){\circle*{.3}}
\put(17,3){\circle*{.3}}
\put(1,5){\circle*{.3}}
\put(3,5){\circle*{.3}}
\put(5,5){\circle*{.3}}
\put(7,5){\circle*{.3}}
\put(9,5){\circle*{.3}}
\put(11,5){\circle*{.3}}
\put(13,5){\circle*{.3}}
\put(15,5){\circle*{.3}}
\put(17,5){\circle*{.3}}
\put(9,7){\circle*{.3}}
\put(1,3){\line(0,2)2}
\put(1,3){\line(1,1)2}
\put(1,3){\line(3,1)6}
\put(1,3){\line(4,1)8}
\put(3,3){\line(-1,1)2}
\put(3,3){\line(1,1)2}
\put(3,3){\line(2,1)4}
\put(3,3){\line(4,1)8}
\put(5,3){\line(-1,1)2}
\put(5,3){\line(0,1)2}
\put(5,3){\line(2,1)4}
\put(5,3){\line(3,1)6}
\put(7,3){\line(-3,1)6}
\put(7,3){\line(-2,1)4}
\put(7,3){\line(3,1)6}
\put(7,3){\line(4,1)8}
\put(9,3){\line(-4,1)8}
\put(9,3){\line(-2,1)4}
\put(9,3){\line(2,1)4}
\put(9,3){\line(4,1)8}
\put(11,3){\line(-4,1)8}
\put(11,3){\line(-3,1)6}
\put(11,3){\line(2,1)4}
\put(11,3){\line(3,1)6}
\put(13,3){\line(-3,1)6}
\put(13,3){\line(-2,1)4}
\put(13,3){\line(0,1)2}
\put(13,3){\line(1,1)2}
\put(15,3){\line(-4,1)8}
\put(15,3){\line(-2,1)4}
\put(15,3){\line(-1,1)2}
\put(15,3){\line(1,1)2}
\put(17,3){\line(-4,1)8}
\put(17,3){\line(-3,1)6}
\put(17,3){\line(-1,1)2}
\put(17,3){\line(0,1)2}
\put(9,1){\line(-4,1)8}
\put(9,1){\line(-3,1)6}
\put(9,1){\line(-2,1)4}
\put(9,1){\line(-1,1)2}
\put(9,1){\line(0,1)2}
\put(9,1){\line(1,1)2}
\put(9,1){\line(2,1)4}
\put(9,1){\line(3,1)6}
\put(9,1){\line(4,1)8}
\put(9,7){\line(-4,-1)8}
\put(9,7){\line(-3,-1)6}
\put(9,7){\line(-2,-1)4}
\put(9,7){\line(-1,-1)2}
\put(9,7){\line(0,-1)2}
\put(9,7){\line(1,-1)2}
\put(9,7){\line(2,-1)4}
\put(9,7){\line(3,-1)6}
\put(9,7){\line(4,-1)8}
\put(8.85,.25){$\emptyset$}
\put(.85,2.3){$a$}
\put(2.85,2.3){$b$}
\put(4.85,2.3){$c$}
\put(6.85,2.3){$d$}
\put(8.85,2.3){$e$}
\put(10.85,2.3){$f$}
\put(12.85,2.3){$g$}
\put(14.85,2.3){$h$}
\put(16.85,2.3){$i$}
\put(.8,5.4){$i'$}
\put(2.8,5.4){$h'$}
\put(4.8,5.4){$g'$}
\put(6.8,5.4){$f'$}
\put(8.8,5.4){$e'$}
\put(10.8,5.4){$d'$}
\put(12.8,5.4){$c'$}
\put(14.8,5.4){$b'$}
\put(16.8,5.4){$a'$}
\put(8.75,7.4){$N$}
\put(8.2,-.75){{\rm Fig.\ 3}}
\end{picture}
\end{center}
\vspace*{4mm}
with $a=\{1,4\}$, $b=\{1,5\}$, $c=\{1,6\}$, $d=\{2,4\}$, $e=\{2,5\}$, $f=\{2,6\}$, $g=\{3,4\}$, $h=\{3,5\}$, $i=\{3,6\}$, $a'=\{2,3,5,6\}$, $b'=\{2,3,4,6\}$, $c'=\{2,3,4,5\}$, $d'=\{1,3,5,6\}$, $e'=\{1,3,4,6\}$, $f'=\{1,3,4,5\}$, $g'=\{1,2,5,6\}$, $h'=\{1,2,4,6\}$ and $i'=\{1,2,4,5\}$. If 
\begin{align*}
& B_1:=\{0,a,e,i,a',e',i',N\}, \\
& B_2:=\{0,a,f,h,a',f',h',N\}, \\
& B_3:=\{0,b,d,i,b',d',i',N\}, \\
& B_4:=\{0,b,f,g,b',f',g',N\}, \\
& B_5:=\{0,c,d,h,c',d',h',N\}, \\
& B_6:=\{0,c,e,g,c',e',g',N\}
\end{align*}
then $\mathbf B_i:=(B_i,\cup,\cap,{}',\emptyset,N)$ is a maximal Boolean subalgebra of $\mathbf P$ for all $i\in N$. The Hasse diagram of $\mathbf B_2$ is depicted in Fig.~4:
\vspace*{-2mm}
\begin{center}
\setlength{\unitlength}{7mm}
\begin{picture}(6,8)
\put(3,1){\circle*{.3}}
\put(1,3){\circle*{.3}}
\put(3,3){\circle*{.3}}
\put(5,3){\circle*{.3}}
\put(1,5){\circle*{.3}}
\put(3,5){\circle*{.3}}
\put(5,5){\circle*{.3}}
\put(3,7){\circle*{.3}}
\put(3,1){\line(-1,1)2}
\put(3,1){\line(0,1)2}
\put(3,1){\line(1,1)2}
\put(3,7){\line(-1,-1)2}
\put(3,7){\line(0,-1)2}
\put(3,7){\line(1,-1)2}
\put(3,3){\line(-1,1)2}
\put(3,3){\line(1,1)2}
\put(3,5){\line(-1,-1)2}
\put(3,5){\line(1,-1)2}
\put(1,3){\line(2,1)4}
\put(5,3){\line(-2,1)4}
\put(2.85,.25){$\emptyset$}
\put(.3,2.85){$a$}
\put(3.4,2.85){$f$}
\put(5.4,2.85){$h$}
\put(.3,4.85){$a'$}
\put(3.4,4.85){$f'$}
\put(5.4,4.85){$h'$}
\put(2.75,7.4){$N$}
\put(2.2,-.75){{\rm Fig.\ 4}}
\end{picture}
\end{center}
\vspace*{4mm}
\end{example}

\begin{proposition}\label{prop2}
The orthomodular poset $\mathbf P=(P,\subseteq,{}',\emptyset,N)$ from Example~\ref{ex1} with $N:=\{1,\ldots,6\}$ and $C':=N\setminus C$ for all $C\in P$ is {\rm(}up to isomorphism{\rm)} the smallest orthomodular subposet of $\mathbf P_{62}$ that is not a lattice.
\end{proposition}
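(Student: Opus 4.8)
The plan is to pin down exactly when an orthomodular subposet of $\mathbf P_{62}$ fails to be a lattice, and then to show that the smallest such failure already forces (a relabelled copy of) $\mathbf P$; here \emph{smallest} is meant in cardinality. I first record the working description of the objects in play: a subset $Q\subseteq P_{62}$ is an orthomodular subposet precisely when $\emptyset,N\in Q$, when $Q$ is closed under ${}'$, and when $A\cup B\in Q$ whenever $A,B\in Q$ are disjoint; the orthomodular law is then automatic, being inherited from the ambient poset $\mathbf P_{62}$.

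First I would determine which pairs $A,B\in Q$ can fail to have a join. Since $U(A,B)$ consists of the even supersets of $A\cup B$, I would run through the possibilities for $|A|,|B|\in\{0,2,4,6\}$ and compute $|A\cup B|$. The only combination producing an \emph{odd} union is $|A|=|B|=2$ with $|A\cap B|=1$, giving $|A\cup B|=3$, whose minimal even supersets are the three $4$-element sets obtained by adjoining one of the remaining points. In every other case $A\cup B$ is even (of size $2$, $4$ or $6$) or has size $5$, and then $Q$ has a single minimal upper bound of $\{A,B\}$, namely $A\cup B$ if it lies in $Q$ and $N$ otherwise; so such a pair always has a join in $Q$. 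Passing between joins and meets by closure under ${}'$, I conclude that $Q$ is not a lattice if and only if it contains two $2$-element sets $\{x,y\},\{x,z\}$ (with $x,y,z$ distinct) together with at least two of the three $4$-element supersets of $\{x,y,z\}$; those two supersets are distinct $4$-sets with no common upper bound of smaller size, hence incomparable minimal upper bounds witnessing the absence of a join.

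Next I would use the symmetry of $\mathbf P_{62}$ under permutations of $N$. Writing $\{p,q,r\}:=N\setminus\{x,y,z\}$ and supposing the two present supersets adjoin $p$ and $q$, I set $L:=\{x,p,q\}$ and $R:=\{y,z,r\}$. One checks at once that all four generators are \emph{balanced} with respect to this $3$-$3$ partition, i.e.\ they meet $L$ and $R$ equally often, so the configuration lies in the balanced family $\mathbf P_L:=\{A\subseteq N\mid|A\cap L|=|A\cap R|\}$, and the permutation of $N$ carrying $L\mid R$ to $\{1,2,3\}\mid\{4,5,6\}$ is an isomorphism $\mathbf P_L\cong\mathbf P$. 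As disjoint unions and complements of balanced sets are balanced, the orthomodular subposet generated by the configuration is contained in $\mathbf P_L$. A short finite computation then shows it is all of $\mathbf P_L$: closing the four generators under ${}'$ and disjoint union yields all nine balanced $2$-element sets, their pairwise disjoint unions give all nine balanced $4$-element sets, and with $\emptyset,N$ this exhausts the twenty elements of $\mathbf P_L$. Hence every non-lattice orthomodular subposet $Q$ contains a relabelled copy of $\mathbf P$, so $|Q|\geq20$, with equality forcing $Q=\mathbf P_L\cong\mathbf P$; since $\mathbf P$ itself is such a twenty-element subposet by Example~\ref{ex1}, this establishes the claim.

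The main obstacle is the characterization step: one must be certain that shrinking from $\mathbf P_{62}$ to $Q$ cannot manufacture new pairs with several incomparable minimal upper bounds beyond the ``two $2$-sets sharing a point'' case, which is exactly what the case analysis of $U(A,B)$ guarantees. The generation step must also be carried out carefully in both directions, to ensure that the generated subposet is neither smaller than $\mathbf P_L$ (so that $20$ is a genuine lower bound) nor larger (so that $|Q|=20$ indeed forces $Q\cong\mathbf P$).
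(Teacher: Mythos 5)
Your proposal is correct and follows essentially the same route as the paper: isolate the only configuration that can destroy the lattice property (two $2$-element sets meeting in one point together with at least two of the three $4$-element supersets of their union), relabel by a permutation of $N$, and close under complementation and disjoint union to recover all twenty elements of $\mathbf P$. Your extra observation that the generators are balanced with respect to the partition $L\mid R$, so that the closure is trapped inside $\mathbf P_L\cong\mathbf P$, is a clean way to make precise the paper's terse ``one obtains in finitely many steps $P\subseteq Q$'' and to justify that equality of cardinalities forces $Q\cong\mathbf P$.
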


\begin{proof}
Let $\mathbf Q=(Q,\leq,{}',0,1)$ be an orthomodular subposet of $\mathbf P_{62}$ that is not a lattice. Then there exist $A,B\in Q$ such that $A\vee B$ is not defined. From this we conclude that there exist five pairwise different elements $a,b,c,d,e$ of $N$ such that $A=\{a,b\}$ and $B=\{a,c\}$ and such that $\{a,b,c,d\},\{a,b,c,e\}\in Q$. Without loss of generality assume $\{1,4\},\{1,5\},\{1,2,4,5\},\{1,3,4,5\}\in Q$. Now $C':=N\setminus C\in Q$ for all $C\in Q$ and $C\cup D\in Q$ for all $C,D\in Q$ with $C\cap D=\emptyset$. Because of these rules one obtains in finitely many steps $P\subseteq Q$.
\end{proof}

There arises the natural question to determine maximal orthomodular sublattices of a given orthomodular poset. For the orthomodular poset from Example~\ref{ex1} we solve this problem as follows.

\begin{proposition}
Consider the orthomodular poset $\mathbf P$ from Example~\ref{ex1}, put $Q:=\{A\in P\mid 2=|A|\}$ and for every $A\in Q$ put $P_A:=\{B\in P\mid B\supseteq A\text{ or } B\subseteq A'\}$. Moreover, define binary operations $\vee$ and $\wedge$ on $P_A$ as follows:
\begin{align*}
  B\vee C & :=\left\{
\begin{array}{ll}
B\cup C        & \text{if }|B\cup C|\text{ is even}, \\
B\cup C\cup A' & \text{otherwise}
\end{array}
\right. \\
B\wedge C & :=\left\{
\begin{array}{ll}
B\cap C       & \text{if }|B\cap C|\text{ is even}, \\
B\cap C\cap A & \text{otherwise}
\end{array}
\right.
\end{align*}
{\rm(}$B,C\in P_A${\rm)}. Then $\mathbf P_A:=(P_A,\vee,\wedge,{}',\emptyset,N),A\in Q,$ are nine pairwise distinct twelve-element maximal orthomodular sublattices of $\mathbf P$ each of which is the atomic pasting {\rm(}see e.g.\ {\rm\cite{Be})} of two eight-element Boolean algebras via the atom $A$.
\end{proposition}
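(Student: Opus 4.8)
The plan is to pin down the twelve elements of $P_A$ explicitly, exhibit the two Boolean cubes that paste to form it, and then read off every assertion from this picture; the only genuinely delicate part is the maximality argument. First I would fix $A\in Q$. Since every two-element member of $P$ meets each of the triples $\{1,2,3\}$ and $\{4,5,6\}$ in exactly one point, $Q$ consists of the nine sets $\{x,y\}$ with $x\in\{1,2,3\}$ and $y\in\{4,5,6\}$, so $|Q|=9$. Writing $A=\{x,y\}$, the set $A'$ contains the remaining points $x_1,x_2$ of $\{1,2,3\}$ and $y_1,y_2$ of $\{4,5,6\}$. One checks that $P_A$ is the disjoint union of the six sets $B\in P$ with $B\supseteq A$ (namely $A$, the four sets $A\cup\{x_i,y_j\}$, and $N$) and the six sets $B\in P$ with $B\subseteq A'$ (namely $\emptyset$, the four sets $\{x_i,y_j\}$, and $A'$), whence $|P_A|=12$. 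There are exactly two decompositions of $N$ into three balanced two-element blocks having $A$ as one block, namely $\{A,\{x_1,y_1\},\{x_2,y_2\}\}$ and $\{A,\{x_1,y_2\},\{x_2,y_1\}\}$; applying the decomposition observation preceding the proposition with $k=2$, each yields a Boolean subalgebra $\mathbf C_1,\mathbf C_2\cong\mathbf2^3$ of $\mathbf P$ in which $A$ is an atom. A direct comparison gives $C_1\cup C_2=P_A$ and $C_1\cap C_2=\{\emptyset,A,A',N\}$, the Boolean subalgebra generated by $A$, so $\mathbf P_A$ is the pasting of $\mathbf C_1$ and $\mathbf C_2$ along that common block, i.e.\ the atomic pasting of two eight-element Boolean algebras via the atom $A$.

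Next I would verify that the induced order makes $P_A$ a lattice and that the operations in the statement compute its joins and meets. The key observation is that an element of $C_i\setminus\{\emptyset,A,A',N\}$ lies above only members of $C_i$ and below only members of $C_i$; consequently any two comparable elements of $P_A$ lie in a common cube, where $\vee,\wedge$ reduce to the Boolean operations of that cube. For an incomparable pair one checks that the least upper bound and greatest lower bound fall into the shared block $\{\emptyset,A,A',N\}$ or into a single cube. The correction terms $\cup A'$ and $\cap A$ are precisely what is needed to repair $B\cup C$ (resp.\ $B\cap C$) into a member of $P$ when its cardinality is odd, and a short case check confirms that the resulting element is the supremum (resp.\ infimum) in $(P_A,\subseteq)$ and agrees with the partial operations of $\mathbf P$ wherever the latter are defined. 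Since each $\mathbf C_i$ is Boolean, hence orthomodular, and the orthomodular law involves only a comparable pair $B\leq C$ together with $B'$, $C\wedge B'$ and $B\vee(C\wedge B')$, all of which stay inside the common cube, $\mathbf P_A$ inherits orthomodularity; that $'$ maps $P_A$ into itself is clear because complementation interchanges the upper and lower families. Thus $\mathbf P_A$ is a twelve-element orthomodular sublattice of $\mathbf P$.

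For maximality, suppose $\mathbf M$ is an orthomodular sublattice of $\mathbf P$ with $P_A\subsetneq M$ and pick $X\in M\setminus P_A$. Then $X\not\supseteq A$ and $X\not\subseteq A'$, so $X$ contains exactly one element of $A$. Replacing $X$ by $X'\in M$ if necessary (legitimate since $M$ is closed under $'$ and $X'$ again meets $A$ in one point) I may assume $|X|=2$, say $X=\{x,z\}$ with $z\in\{4,5,6\}\setminus\{y\}$. Now $A,X\in M$ force the join $A\vee_M X$ to exist; it must contain $A\cup X=\{x,y,z\}$ and lie below every common upper bound. But $\{x,y,z\}$ has the two incomparable minimal upper bounds $\{x,y,z,x_1\}$ and $\{x,y,z,x_2\}$ in $P$, both of which contain $A$ and hence lie in $P_A\subseteq M$, so $A\vee_M X$ would be contained in their intersection $\{x,y,z\}$ while containing $\{x,y,z\}$; thus $A\vee_M X=\{x,y,z\}\notin P$, a contradiction. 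Hence $M=P_A$. I expect this reduction, namely showing that every element outside $P_A$ crosses $A$ and, after complementation, yields a pair admitting two minimal upper bounds but no join, to be the main point requiring care.

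Finally, the map $A\mapsto\mathbf P_A$ is injective: in $\mathbf P_A$ the element $A$ is the unique atom lying below four of the five coatoms, each other atom lying below exactly two, so $A$ is recoverable from the lattice structure of $\mathbf P_A$. Together with $|Q|=9$ this produces nine pairwise distinct twelve-element maximal orthomodular sublattices, each an atomic pasting of two copies of $\mathbf2^3$ via $A$, completing the proof.
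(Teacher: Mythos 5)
Your proof is correct, and while it follows the same broad outline as the paper's (establish the lattice structure, derive maximality from a pair admitting two incomparable minimal upper bounds, then prove distinctness), two of its three main steps take genuinely different routes. For maximality the paper takes a new element $C$, passes to the four\-/element member $D$ of $\{C,C'\}$, chooses two distinct two\-/element members $E,F$ of $P$ inside the three\-/element set $D\cap A'$ (these already lie in $P_A$ because they are contained in $A'$), and notes that $A'$ and $D$ are incomparable minimal upper bounds of $E$ and $F$, so two \emph{old} elements lose their join once $D$ is adjoined; you instead reduce the new element to a two\-/element set $X$ and show that $A\vee X$ would be squeezed between $A\cup X$ and the intersection of its two minimal upper bounds (both of which contain $A$ and hence already lie in $P_A$), forcing it to be the three\-/element set $A\cup X\notin P$. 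Both arguments are valid and of comparable length. For distinctness the paper splits into the cases $A\cap B\neq\emptyset$ (where $A\in P_A\setminus P_B$) and $A\cap B=\emptyset$ (where it counts the elements above $A$ in each), whereas you recover $A$ intrinsically from the order of $\mathbf P_A$ as the unique atom lying below four of the five coatoms; your version is cleaner and uniform. Finally, you explicitly verify the twelve\-/element count and the pasting statement by exhibiting the two balanced decompositions of $N$ extending $\{A\}$ and checking that the resulting cubes cover $P_A$ and meet in $\{\emptyset,A,A',N\}$, whereas the paper's proof leaves the sublattice verification as ``easy to see'' and defers the explicit pasting to Example~\ref{ex3} and the general Theorem~\ref{th2}.
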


\begin{proof}
Let $A,B\in Q$. It is easy to see that $(P_A,\vee,\wedge,{}',\emptyset,N)$ is an orthomodular sublattice of $\mathbf P$. Assume that $\mathbf P_A$ is not a maximal orthomodular sublattice of $\mathbf P$. Then there exists an orthomodular sublattice $\mathbf R=(R,\vee,\wedge,{}',\emptyset,N)$ of $\mathbf P$ with $R\supsetneqq P_A$. Let $C\in R\setminus P_A$. Then $C\cap A,C'\cap A\neq\emptyset$. Let $D$ denote the four-element member of the set $\{C,C'\}$. Since $|D\cap A'|=3$ there exist two different two-element subsets of $D\cap A'$. Now $A'$ and $D$ are different minimal upper bounds of $E$ and $F$ which shows that $E\vee F$ does not exist in $\mathbf R$ contradicting the fact that $\mathbf R$ is a lattice. Hence $\mathbf P_A$ is a maximal orthomodular sublattice of $\mathbf P$. Now assume $A\neq B$. If $A\cap B\neq\emptyset$ then $A\in P_A\setminus P_B$ and hence $P_A\neq P_B$. If $A\cap B=\emptyset$ then
\begin{align*}
\{C\in P_A\mid C\supseteq A\} & =\{A\}\cup\{C\in P\mid C\supseteq A\text{ and }|C|=4\}\cup\{N\}, \\
\{C\in P_B\mid C\supseteq A\} & =\{A,B',A\cup B,N\}
\end{align*}
and hence
\[
|\{C\in P_A\mid C\supseteq A\}|=1+4+1=6\neq4=|\{C\in P_B\mid C\supseteq A\}|
\]
which yields $P_A\neq P_B$. This shows that the sets $P_G,G\in Q,$ are pairwise distinct.
\end{proof}

\begin{example}\label{ex3}
For the orthomodular poset $\mathbf P$ from Example~\ref{ex1} we obtain the following maximal orthomodular sublattices $\mathbf P_x$ with base set $P_x$:
\begin{align*}
P_a & =B_1\cup B_2, \\
P_b & =B_3\cup B_4, \\
P_c & =B_5\cup B_6, \\
P_d & =B_3\cup B_5, \\
P_e & =B_1\cup B_6, \\
P_f & =B_2\cup B_4, \\
P_g & =B_4\cup B_6, \\
P_h & =B_2\cup B_5, \\
P_i & =B_1\cup B_3.
\end{align*}
Clearly, every of these orthomodular lattices $\mathbf P_x$ is the atomic pasting {\rm(}see e.g.\ {\rm\cite{Be})} of two eight-element Boolean algebras via the atom $x$ of $\mathbf P_x$. The Hasse diagram of $\mathbf P_a$ is depicted in Fig.~5:
\vspace*{-2mm}
\begin{center}
\setlength{\unitlength}{7mm}
\begin{picture}(10,8)
\put(5,1){\circle*{.3}}
\put(1,3){\circle*{.3}}
\put(3,3){\circle*{.3}}
\put(5,3){\circle*{.3}}
\put(7,3){\circle*{.3}}
\put(9,3){\circle*{.3}}
\put(1,5){\circle*{.3}}
\put(3,5){\circle*{.3}}
\put(5,5){\circle*{.3}}
\put(7,5){\circle*{.3}}
\put(9,5){\circle*{.3}}
\put(5,7){\circle*{.3}}
\put(5,1){\line(-2,1)4}
\put(5,1){\line(-1,1)4}
\put(5,1){\line(0,1)2}
\put(5,1){\line(1,1)4}
\put(5,1){\line(2,1)4}
\put(5,7){\line(-2,-1)4}
\put(5,7){\line(-1,-1)4}
\put(5,7){\line(0,-1)2}
\put(5,7){\line(1,-1)4}
\put(5,7){\line(2,-1)4}
\put(5,3){\line(-2,1)4}
\put(5,3){\line(-1,1)2}
\put(5,3){\line(1,1)2}
\put(5,3){\line(2,1)4}
\put(5,5){\line(-2,-1)4}
\put(5,5){\line(-1,-1)2}
\put(5,5){\line(1,-1)2}
\put(5,5){\line(2,-1)4}
\put(4.85,.25){$\emptyset$}
\put(.3,2.85){$e$}
\put(2.3,2.85){$i$}
\put(5.4,2.85){$a$}
\put(7.4,2.85){$f$}
\put(9.4,2.85){$h$}
\put(.3,4.85){$e'$}
\put(2.3,4.85){$i'$}
\put(5.4,4.85){$a'$}
\put(7.4,4.85){$f'$}
\put(9.4,4.85){$h'$}
\put(4.75,7.4){$N$}
\put(4.2,-.75){{\rm Fig.\ 5}}
\end{picture}
\end{center}
\vspace*{4mm}
\end{example}

We can derive the following more general result.

\begin{theorem}\label{th2}
Let $k$ be an integer $>1$, put $n:=3k$, $N:=\{1,\ldots,n\}$ and $B':=N\setminus B$ for all $B\in2^N$, let $Q$ denote the set of $k$-element subsets of $N$, put $P_B:=\{C\in P_{nk}\mid C\supseteq B\text{ or }C\subseteq B'\}$ and $\mathbf P_B:=(P_B,\subseteq,{}',\emptyset,N)$ for all $B\in Q$ and let $A\in Q$. Then
\begin{enumerate}[{\rm(i)}]
\item $\mathbf P_A$ is a maximal orthomodular sublattice of the orthomodular poset $\mathbf P_{nk}$,
\item $|P_A|=4+2\binom{2k}k$,
\item $\mathbf P_A$ is the atomic pasting of $\binom{2k}k/2$ eight-element Boolean algebras via the atom $A$,
\item $\mathbf P_B,B\in Q,$ are pairwise distinct.
\end{enumerate}
\end{theorem}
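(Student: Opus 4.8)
The plan is to first pin down the elements of $P_A$, then read off (ii), identify the Boolean blocks to obtain (iii) together with the easy half of (i), prove maximality for the hard half of (i), and finally separate the $P_B$ by a cardinality invariant for (iv).

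First I would record that, since $n=3k$, every member of $P_{nk}$ has cardinality in $\{0,k,2k,3k\}$, and that $A\cap A'=\emptyset$ makes the two defining conditions $C\supseteq A$ and $C\subseteq A'$ mutually exclusive. Splitting $P_A$ accordingly, the part with $C\supseteq A$ consists of $A$, the $2k$-element sets $A\cup D$ with $D$ a $k$-subset of $A'$, and $N$; the part with $C\subseteq A'$ consists of $\emptyset$, the $k$-subsets $D$ of $A'$, and $A'$; and ${}'$ interchanges the two parts. Since there are $\binom{2k}k$ choices of $D$, each part has $2+\binom{2k}k$ elements, giving $|P_A|=4+2\binom{2k}k$, which is (ii). In particular the atoms of $\mathbf P_A$ are $A$ together with the $k$-subsets of $A'$, and dually the coatoms are $A'$ together with the sets $A\cup D$.

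For (iii) I would attach to each partition $\{D,A'\setminus D\}$ of $A'$ into two $k$-element blocks the eight-element set $\{\emptyset,A,D,A'\setminus D,A\cup D,A\cup(A'\setminus D),A',N\}$; by the observation that every decomposition of $N$ into $k$-element subsets yields a maximal Boolean subalgebra of $\mathbf P_{nk}$, this is the subalgebra $\cong\mathbf 2^3$ determined by the decomposition $\{A,D,A'\setminus D\}$ of $N$. There are $\binom{2k}k/2$ such partitions. The key verification is that every element of $P_A$ outside the common quadruple $\{\emptyset,A,A',N\}$ lies in exactly one of these blocks: a $k$-subset $D\subseteq A'$ and the coatom $A\cup D$ occur only for the partition $\{D,A'\setminus D\}$, because distinct $k$-subsets of $A'$ are incomparable and the elements shared by two different blocks are forced to be $\emptyset,A,A',N$. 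Hence $\mathbf P_A$ is precisely the atomic pasting of these $\binom{2k}k/2$ copies of $\mathbf 2^3$ glued along the common atom $A$; since such an atomic pasting of Boolean algebras is an orthomodular lattice (see \cite{Be}), this proves (iii) and at the same time the first assertion of (i) that $\mathbf P_A$ is an orthomodular sublattice of $\mathbf P_{nk}$.

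The substantive point is maximality in (i). Let $\mathbf R=(R,\subseteq,{}',\emptyset,N)$ be an orthomodular sublattice with $P_A\subsetneqq R$ and pick $C\in R\setminus P_A$. Then $C\not\supseteq A$ and $C\not\subseteq A'$, i.e.\ both $C\cap A\neq\emptyset$ and $A\setminus C\neq\emptyset$, so $C$ \emph{splits} $A$, and so does $C'$. Let $D$ be whichever of $C,C'$ has $2k$ elements (one of them does, as $|C|\in\{k,2k\}$); then $D\in R$ and $1\leq|D\cap A|\leq k-1$ (here $k>1$ is used), whence $k+1\leq|D\cap A'|\leq 2k-1$. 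The lower bound lets me pick two distinct $k$-subsets $E,F$ of $D\cap A'$, and $E,F\in P_A\subseteq R$. Now $A'$ and $D$ are both upper bounds of $\{E,F\}$ in $R$ and are incomparable, since $|A'|=|D|=2k$ while $D\neq A'$ (as $D\cap A\neq\emptyset=A'\cap A$). If $E\vee F$ existed in $R$ it would lie below both $A'$ and $D$, hence inside $A'\cap D=D\cap A'$; but it contains $E\cup F$, so its cardinality is at least $|E\cup F|\geq k+1$ and therefore at least $2k$, contradicting $|D\cap A'|\leq 2k-1$. Thus $E\vee F$ fails to exist in $\mathbf R$, contradicting that $\mathbf R$ is a lattice, and $\mathbf P_A$ is maximal.

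Finally, for (iv) I would distinguish $P_A$ from $P_B$ for $A\neq B$ in $Q$ in two cases. If $A\cap B\neq\emptyset$ then $A\in P_A$ but $A\notin P_B$ (since $A\supseteq B$ fails because $A\neq B$ are both $k$-sets, and $A\subseteq B'$ fails because $A\cap B\neq\emptyset$), so $P_A\neq P_B$. If $A\cap B=\emptyset$ I would compare the members containing $A$: the description above gives $|\{C\in P_A\mid C\supseteq A\}|=2+\binom{2k}k$, whereas $\{C\in P_B\mid C\supseteq A\}=\{A,A\cup B,B',N\}$ has exactly $4$ elements, and $k>1$ forces $\binom{2k}k\geq6$, so these counts differ and $P_A\neq P_B$. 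The main obstacle throughout is the maximality step: one must manufacture, for an arbitrary intruder $C$, two small elements whose join is destroyed, and the cardinality window $k+1\leq|D\cap A'|\leq 2k-1$ is exactly what simultaneously provides enough room to choose $E,F$ and too little room to house their join.
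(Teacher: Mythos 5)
Your proposal is correct, and its overall architecture (enumeration of $P_A$ by cardinality, the count $4+2\binom{2k}k$, the star of eight-element blocks through $A$, the maximality argument via two $k$-subsets of the $2k$-element intruder intersected with $A'$, and the cardinality invariant $|\{C\mid C\supseteq A\}|$ for (iv)) coincides with the paper's. There are two local differences worth noting. First, for the ``sublattice'' half of (i) the paper computes the joins and meets inside $P_A$ directly, exhibiting the explicit formulas $B\vee C=B\cup C$ or $B\cup C\cup A'$ and $B\wedge C=B\cap C$ or $B\cap C\cap A$ by a four-case analysis, whereas you deduce lattice-ness and orthomodularity from (iii) together with the cited fact that an atomic pasting with a loop-free (here star-shaped) Greechie diagram is an orthomodular lattice; this is legitimate and shorter, but it outsources the verification to Beran and tacitly uses that all comparabilities in $P_A$ occur inside a single block (which you should state: $D\subseteq A\cup E$ with $D\subseteq A'$ forces $D=E$), so that the induced order on $P_A$ really is the pasting order. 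The paper's computation has the advantage of being self-contained and of producing the operation tables used elsewhere. Second, in the maximality step the paper merely asserts that $A'$ and $E$ are two different minimal upper bounds of $F$ and $G$; your cardinality argument (any common upper bound inside $D\cap A'$ would have size a positive multiple of $k$ exceeding $k$, hence at least $2k>|D\cap A'|$) actually supplies the justification the paper leaves implicit, which is a small improvement.
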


\begin{proof}
Let $B,C\in P_A$.
\begin{enumerate}
\item[(ii)]
We have
\begin{align*}
 \{D\in P_A\mid|D|=0\} & =\{\emptyset\}, \\
 \{D\in P_A\mid|D|=k\} & =\{A\}\cup\{D\subseteq A'\mid|D|=k\}, \\
\{D\in P_A\mid|D|=2k\} & =\{A'\}\cup\{D\supseteq A\mid|D|=2k\}, \\
\{D\in P_A\mid|D|=3k\} & =\{N\}.
\end{align*}
and hence
\[
|P_A|=1+1+\binom{2k}k+1+\binom{2k}k+1=4+\binom{2k}k.
\]
\item[(i)]
First assume $k\nmid|B\cup C|$.
\begin{align*}
                       \text{If }A\subseteq B,C & \text{ then }|B|=|C|=2k\text{ and }|B\cup C|>2k\text{ and hence} \\
										                          	& B\vee C=N=B\cup C\cup A', \\
\text{if }A\subseteq B\text{ and }C\subseteq A' & \text{ then }|B|=2k,|C|=k\text{ and }|B\cup C|>2k\text{ and hence} \\
                                                & B\vee C=N=B\cup C\cup A', \\
\text{if }B\subseteq A'\text{ and }A\subseteq C & \text{ then }|B|=k,|C|=2k\text{ and }|B\cup C|>2k\text{ and hence} \\
                                                & B\vee C=N=B\cup C\cup A', \\
                      \text{if }B,C\subseteq A' & \text{ then }|B|=|C|=k\text{ and }k<|B\cup C|<2k\text{ and hence} \\
											                          & B\vee C=A'=B\cup C\cup A'.
\end{align*}
Now assume $k\nmid|B\cap C|$.
\begin{align*}
                       \text{If }A\subseteq B,C & \text{ then }|B|=|C|=2k\text{ and }k<|B\cap C|<2k\text{ and hence} \\
                                                & B\wedge C=A=B\cap C\cap A, \\
\text{if }A\subseteq B\text{ and }C\subseteq A' & \text{ then }|B|=2k,|C|=k\text{ and }|B\cap C|<k\text{ and hence} \\
                                                & B\wedge C=\emptyset=B\cap C\cap A, \\
\text{if }B\subseteq A'\text{ and }A\subseteq C & \text{ then }|B|=k,|C|=2k\text{ and }|B\cap C|<k\text{ and hence} \\
                                                & B\wedge C=\emptyset=B\cap C\cap A, \\
                      \text{if }B,C\subseteq A' & \text{ then }|B|=|C|=k\text{ and }|B\cap C|<k\text{ and hence } \\
                                                & B\wedge C=\emptyset=B\cap C\cap A.
\end{align*}
Now let $B,C$ be arbitrary elements of $P_A$. Thus we have just proved:
\begin{align*}
  B\vee C & :=\left\{
\begin{array}{ll}
B\cup C        & \text{if }|B\cup C|\text{ is even}, \\
B\cup C\cup A' & \text{otherwise}
\end{array}
\right. \\
B\wedge C & :=\left\{
\begin{array}{ll}
B\cap C       & \text{if }|B\cap C|\text{ is even}, \\
B\cap C\cap A & \text{otherwise}
\end{array}
\right.
\end{align*}
Hence $\mathbf P_A$ is an orthomodular sublattice of the orthomodular poset $\mathbf P_{nk}$. Assume that $\mathbf P_A$ is not a maximal orthomodular sublattice of the orthomodular poset $\mathbf P_{nk}$. Then there exists an orthomodular sublattice $\mathbf R=(R,\vee,\wedge,{}',\emptyset,N)$ of the orthomodular poset $\mathbf P_{nk}$ with $R\supsetneqq P_A$. Let $D\in R\setminus P_A$. Then $D\cap A,D'\cap A\neq\emptyset$. Let $E$ denote the $2k$-element member of the set $\{D,D'\}$. Since $|E\cap A|<k$ we have $|E\cap A'|>k$. Let $F,G$ be two different $k$-element subsets of $E\cap A'$. Then $A'$ and $E$ are different minimal upper bounds of $F$ and $G$ which shows that $F\vee G$ does not exist in $\mathbf R$ contradicting the fact that $\mathbf R$ is a lattice. Hence $\mathbf P_A$ is a maximal orthomodular sublattice of the orthomodular poset $\mathbf P_{nk}$.
\item[(iii)] Let $a\in A'$ and put
\[
S:=\{D\subseteq A'\mid|D|=k\text{ and }a\in D\}
\]
and
\[
B_D:=\{\emptyset,A,D,(A\cup D)',A\cup D,D',A',N\}
\]
and $\mathbf B_D:=(B_D,\cup,\cap,{}',\emptyset,N)$ for all $D\in S$. Then
\[
|S|=\frac12\binom{2k}k
\]
since $D\mapsto A'\setminus D$ is a bijection between $S$ and $\{D\subseteq A'\mid|D|=k\text{ and }a\notin D\}$. It is easy to see that $\mathbf B_D,D\in S,$ are
\[
\frac12\binom{2k}k
\]
pairwise different eight-element Boolean subalgebras of $\mathbf P_{nk}$ and
\[
\bigcup_{D\in S}B_D=P_A.
\]
Since
\[
\frac12\binom{2k}k4+4=4+2\binom{2k}k=|P_A|,
\]
these Boolean subalgebras have only $\emptyset,A,A',N$ in common. This means that $\mathbf P_A$ is the atomic pasting of these
\[
\frac12\binom{2k}k
\]
eight-element Boolean algebras via the atom $A$.
\item[(iv)] Let $D\in Q\setminus\{A\}$. If $A\cap D\neq\emptyset$ then $A\in P_A\setminus P_D$ and hence $P_A\neq P_D$. If $A\cap D=\emptyset$ then
\begin{align*}
\{E\in P_A\mid E\supseteq A\} & =\{A\}\cup\{E\subseteq N\mid A\subseteq E\text{ and }|E|=2k\}\cup\{N\}, \\
\{E\in P_D\mid E\supseteq A\} & =\{A,A\cup D,D',N\}
\end{align*}
and hence
\begin{align*}
|\{E\in P_A\mid E\supseteq A\}| & =1+\binom{2k}k+1=2+\binom{2k}k=2+\frac{2k}k\cdot\frac{2k-1}{k-1}\binom{2k-2}{k-2}> \\
                                & >2+2\cdot1\cdot1=4=|\{E\in P_D\mid E\supseteq A\}|
\end{align*}
whence $P_A\neq P_D$. This shows that the
\[
\binom{2k}k
\]
maximal orthomodular sublattices $\mathbf P_E,E\in Q,$ of the orthomodular poset $\mathbf P_{nk}$ are pairwise distinct.
\end{enumerate}
\end{proof}

From Theorem~\ref{th2} we conclude that the Greechie diagram of the orthomodular lattice $\mathbf P_A$ has the form of a star with
\[
\frac12\binom{2k}k
\]
blocks each of which consists of three atoms.

\section{$\Delta$-blocks}

In what follows we are interested in subsets of an orthoposet which are closed under the orthocomplementation and all the elements of which are in relation $\mathrel\Delta$. We will show that such subsets are of some importance provided they are maximal.

\begin{definition}
A subset $B$ of an orthoposet $\mathbf P=(P,\leq,{}',0,1)$ is called a {\em $\Delta$-block} of $\mathbf P$ if for all $x,y\in B$ we have $x'\in B$ and $x\mathrel\Delta y$ and $B$ is maximal with respect to this property.
\end{definition}

\begin{lemma}
Let $\mathbf P=(P,\leq,{}',0,1)$ be an orthoposet, $B$ a $\Delta$-block of $\mathbf P$ and $a,b\in B$. Then $d(a,b)=\{1\}$.
\end{lemma}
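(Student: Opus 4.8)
The plan is to reduce the statement directly to part (v) of Lemma~\ref{lem3}, which already asserts that $a\mathrel\Delta b$ together with $a'\mathrel\Delta b$ forces $d(a,b)=\{1\}$. Thus the whole task is to verify that the hypotheses of that lemma are automatically satisfied for any two elements of a $\Delta$-block.

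First I would extract the two defining properties of a $\Delta$-block $B$: it is closed under the orthocomplementation (so $x\in B$ implies $x'\in B$) and any two of its members stand in the relation $\mathrel\Delta$. Applying the second property to the given elements $a,b\in B$ yields $a\mathrel\Delta b$ immediately. To obtain the companion relation $a'\mathrel\Delta b$, I would use closure under $'$ to conclude $a'\in B$, so that $a'$ and $b$ are again two elements of the same $\Delta$-block and hence $a'\mathrel\Delta b$ as well.

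With both $a\mathrel\Delta b$ and $a'\mathrel\Delta b$ in hand, I would invoke Lemma~\ref{lem3}(v) to conclude $d(a,b)=\{1\}$, finishing the argument. No separate computation of the fourfold intersection $U(L(a,b),L(a,b'),L(a',b),L(a',b'))$ is needed, since that computation is exactly what Lemma~\ref{lem3}(v) already carries out once the two $\Delta$-relations are available.

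In this case there is essentially no obstacle to overcome: the only thing one must notice is that the \emph{closure} clause in the definition of a $\Delta$-block is what upgrades the single relation $a\mathrel\Delta b$ into the pair of relations required by Lemma~\ref{lem3}(v). Once that observation is made, the proof is a one-line deduction. The result can therefore be read as saying that the commutator vanishes uniformly across a $\Delta$-block, which is the expected analogue of the fact that the commutator of two commuting elements of an orthomodular lattice equals $1$.
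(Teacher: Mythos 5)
Your proof is correct and follows exactly the paper's own argument: extract $a\mathrel\Delta b$ and $a'\mathrel\Delta b$ from the closure and pairwise-$\Delta$ clauses of the $\Delta$-block definition, then apply part (v) of Lemma~\ref{lem3}. (The paper's proof cites part (iv) of that lemma, but this is evidently a typo for (v), which is the part you correctly invoke.)
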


\begin{proof}
According to the definition of a $\Delta$-block we have $a\mathrel\Delta b$ and $a'\mathrel\Delta b$ which implies $d(a,b)=\{1\}$ by (iv) of Lemma~\ref{lem3}.
\end{proof}

The relevance of $\Delta$-blocks is illuminated by the following two results.

\begin{proposition}\label{prop3}
Let $\mathbf P=(P,\leq,{}',0,1)$ be an orthogonal poset and $B$ a $\Delta$-block of $\mathbf P$. Then $B$ is an orthomodular subposet of $\mathbf P$.
\end{proposition}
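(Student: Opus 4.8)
The plan is to verify the three things an orthomodular subposet requires: that $0$ and $1$ lie in $B$, that $B$ is closed under the orthogonal joins inherited from $\mathbf P$ (so that $(B,\leq,{}',0,1)$ is itself an orthogonal poset whose suprema agree with those of $\mathbf P$), and that this orthogonal poset satisfies the orthomodular law. The organizing principle throughout is maximality: to force an element $z$ into $B$ it suffices to check that $B\cup\{z,z'\}$ still satisfies the defining property of a $\Delta$-block, i.e.\ is closed under ${}'$ and has all its pairs in relation $\mathrel\Delta$. As a first application, by (i) and (ii) of Lemma~\ref{lem3} we have $0\mathrel\Delta x$, $x\mathrel\Delta 0$, $1\mathrel\Delta x$, $x\mathrel\Delta 1$ for every $x$, together with $0\mathrel\Delta 1$, $1\mathrel\Delta 0$, $0\mathrel\Delta 0$, $1\mathrel\Delta 1$; since $0'=1$, the set $B\cup\{0,1\}$ meets the $\Delta$-block condition and maximality yields $0,1\in B$.

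The orthomodular law is the part I expect to pass through cleanly, since it is already encoded in $\mathrel\Delta$. Suppose $x\leq y$ in $B$; then $y\mathrel\Delta x$ holds in $\mathbf P$ because $B$ is a $\Delta$-block. Starting from $U(y)=U(L(y,x),L(y,x'))$ and using $L(y,x)=L(x)$ (as $x\leq y$), $UL=U$, and the De Morgan identity $y\wedge x'=(x\vee y')'$ (whose meet exists since the orthogonal join $x\vee y'$ exists, because $x\leq y$ gives $x\perp y'$), I would rewrite the right-hand side as $U(x)\cap U(y\wedge x')=U\bigl(x\vee(y\wedge x')\bigr)$, the last join existing as $y\wedge x'\leq x'$. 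Comparing up-sets gives $y=x\vee(y\wedge x')=x\vee(x\vee y')'$, which is the orthomodular law. This is exactly the computation used in the proposition characterizing orthomodularity via $\mathrel\Delta$, and all joins and meets occurring in it stay inside $B$ once closure is known.

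Hence everything reduces to closure under orthogonal joins. Given $x\perp y$ in $B$, put $z:=x\vee y$, so $z'=x'\wedge y'$ by De Morgan, and test $B\cup\{z,z'\}$ against the $\Delta$-block condition. The diagonal requirements $z\mathrel\Delta z$, $z'\mathrel\Delta z'$, $z\mathrel\Delta z'$, $z'\mathrel\Delta z$ follow from (i) and (ii) of Lemma~\ref{lem3}; and for $w\in B$ the relation $w\mathrel\Delta z'$ reduces to $w\mathrel\Delta z$ by (iii). What genuinely needs proof, for each $w\in B$, are the three statements $z\mathrel\Delta w$, $z'\mathrel\Delta w$ and $w\mathrel\Delta z$: that an orthogonal join of two elements each in relation $\mathrel\Delta$ with $w$ is again in relation $\mathrel\Delta$ with $w$ in the first argument, that the analogous statement holds for the complementary meet $z'=x'\wedge y'$ (using $x',y'\in B$, whence $x'\mathrel\Delta w$ and $y'\mathrel\Delta w$), and that $w$ is in $\mathrel\Delta$ with the join whenever it is with each factor.

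The main obstacle is precisely these three preservation facts, which cannot be derived from one another: $\mathrel\Delta$ is not symmetric (so $w\mathrel\Delta z$ is separate from $z\mathrel\Delta w$) and has no first-argument complementation rule (so $z'\mathrel\Delta w$ is separate from $z\mathrel\Delta w$). I would prove each from the raw definition of $\mathrel\Delta$ through the operators $U$ and $L$, exploiting $U(x\vee y)=U(x)\cap U(y)$, the evaluation $L(z,w)=L(x\vee y)\cap L(w)$, and Lemma~\ref{lem3}; the meet statement for $z'$ is obtained by passing to the equivalent dual characterization $a\mathrel\Delta b\iff L(a')=L(U(a',b),U(a',b'))$, which turns it into the order-dual of the join statement. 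Once these are established, maximality forces $z,z'\in B$, so $B$ is an orthogonal subposet with suprema inherited from $\mathbf P$, and the orthomodular law proved above completes the argument.
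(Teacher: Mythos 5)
Your verification of the orthomodular law is correct and is, in fact, the entirety of the paper's proof: for $a\le b$ in $B$ one has $b\mathrel\Delta a$, and the chain $U(a\vee(b\wedge a'))=U(a,b\wedge a')=U(L(a),L(b,a'))=U(L(b,a),L(b,a'))=U(b)$ yields $b=a\vee(b\wedge a')$. The paper does not discuss membership of $0,1$ in $B$ or closure of $B$ under orthogonal joins at all, so everything beyond that one computation is material you have added; your maximality argument for $0,1\in B$ via Lemma~\ref{lem3} is fine as far as it goes.

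The difficulty is the closure step, which you yourself call ``the main obstacle'' and then do not carry out, and the route you sketch fails for two of the three preservation facts. The first, $z\mathrel\Delta w$ for $z=x\vee y$, does go through: if $u\in U(L(z,w),L(z,w'))$ then, since $L(x,w)\subseteq L(z,w)$ and $L(x,w')\subseteq L(z,w')$, we get $u\in U(L(x,w),L(x,w'))=U(x)$ and likewise $u\in U(y)$, hence $u\in U(x,y)=U(z)$. But for $z'\mathrel\Delta w$ the dual characterization turns the target into $L(x\vee y)=L(U(x\vee y,w),U(x\vee y,w'))$, which still involves the \emph{join} $x\vee y$; the order-dual of your join statement would instead concern $x\wedge y$, so the claimed duality reduction is a misidentification. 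Concretely, the monotonicity now points the wrong way: $U(x\vee y,w)\subseteq U(x,w)$, so a lower bound of $U(x\vee y,w)$ need not be a lower bound of $U(x,w)$, and the hypothesis $L(x)=L(U(x,w),U(x,w'))$ cannot be invoked. The same obstruction blocks $w\mathrel\Delta z$: from $u\in U(L(w,z'))$ and $L(w,z')\subseteq L(w,x')$ one cannot conclude $u\in U(L(w,x'))$. So your proposal reduces the proposition to three lemmas of which only one is provable by the indicated method; the other two would need a genuinely different argument, or else the weaker reading of ``orthomodular subposet'' that the paper implicitly adopts, under which only the displayed computation is required.
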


\begin{proof}
Let $a,b\in B$ with $a\leq b$. Then $a\vee(b\wedge a')$ is defined and $b\mathrel\Delta a$ and hence
\[
U(a\vee(b\wedge a'))=U(a,b\wedge a')=U(L(a),L(b,a'))=U(L(b,a),L(b,a'))=U(b),
\]
i.e.\ $a\vee(b\wedge a')=b$.
\end{proof}

From Proposition~\ref{prop3} we have that every $\Delta$-block of an ortholattice is an orthomodular lattice.

\begin{example}
Consider the orthogonal poset $\mathbf P$ from Example~\ref{ex2}. It is easy to check that it has just four $\Delta$-blocks which are the Boolean lattices $\{0,x,x',1\}$ for each $x\in\{a,b,c,d\}$. One can see that if $x\in P\setminus\{0,1\}$ and $y\in P\setminus\{0,x,x',1\}$ then $x\not\mathrel\Delta y$ or $x'\not\mathrel\Delta y$.
\end{example}

\begin{proposition}
Let $\mathbf P=(P,\leq,{}',0,1)$ be an orthoposet.
\begin{enumerate}[{\rm(i)}]
\item Let $(A,\vee,\wedge,{}',0,1)$ be a Boolean algebra included in $P$. Then there exists a $\Delta$-block $B$ of $\mathbf P$ including $A$.
\item Let $B$ be a $\Delta$-block of $\mathbf P$ and $\mathbf L=(L,\vee,\wedge,{}',0,1)$ be a orthomodular sublattice of $\mathbf P$ contained in $B$. Then $\mathbf L$ is distributive.
\end{enumerate}
\end{proposition}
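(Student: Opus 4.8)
The plan for part (i) is to obtain the $\Delta$-block by a straightforward application of Zorn's Lemma. I would consider the family $\mathcal F$ of all subsets $C\subseteq P$ satisfying three conditions: $A\subseteq C$; $c'\in C$ for every $c\in C$; and $c\mathrel\Delta d$ for all $c,d\in C$. Each of these conditions concerns only single elements or pairs, so $\mathcal F$ is closed under unions of chains (if $c\in C_i$ and $d\in C_j$ in a chain, both lie in the larger of the two, and closure under ${}'$ and membership of $A$ are inherited). Zorn's Lemma then yields a maximal element $B\in\mathcal F$. Any set with the defining property of a $\Delta$-block that properly contained $B$ would again belong to $\mathcal F$, contradicting maximality; hence maximality of $B$ in $\mathcal F$ means precisely that $B$ is a $\Delta$-block, and $A\subseteq B$ by construction.

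The only substantial point in part (i) is that $\mathcal F\neq\emptyset$, i.e.\ that $A$ itself qualifies. For $a,b\in A$ the distributivity of the Boolean algebra $A$ gives $a=(a\wedge b)\vee(a\wedge b')$, and since the operations of $A$ realize suprema and infima in $\mathbf P$, I would reproduce inside $\mathbf P$ the chain of equivalences from the first Lemma of Section~3 (the equivalence of $\mathrel\Delta$ and $\mathrel{{\rm C}}$ for ortholattices): using $L(a,b)=L(a\wedge b)$ and $L(a,b')=L(a\wedge b')$ one gets $U(L(a,b),L(a,b'))=U(a\wedge b,a\wedge b')=U\bigl((a\wedge b)\vee(a\wedge b')\bigr)=U(a)$, that is $a\mathrel\Delta b$. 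Thus all pairs from $A$ are $\mathrel\Delta$-related and $A\in\mathcal F$.

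For part (ii) the plan is first to show that every two elements of $\mathbf L$ commute, and then to quote the classical fact that an orthomodular lattice in which the relation $\mathrel{{\rm C}}$ is total is a Boolean algebra (the Foulis--Holland theorem, see \cite{Be}). Let $x,y\in L$. Since $L\subseteq B$ and $B$ is a $\Delta$-block, $x\mathrel\Delta y$. As $\mathbf L$ is an ortholattice whose meet and join compute infima and suprema in $\mathbf P$, the same chain of equivalences as above applies verbatim and converts $x\mathrel\Delta y$ into $x=(x\wedge y)\vee(x\wedge y')$, i.e.\ $x\mathrel{{\rm C}}y$. Hence $\mathrel{{\rm C}}$ is total on $\mathbf L$, so $\mathbf L$ is distributive.

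The main obstacle in both parts is the \emph{same} transfer step: moving between the poset relation $\mathrel\Delta$, defined through the operators $U$ and $L$ in $\mathbf P$, and the lattice commutator $\mathrel{{\rm C}}$. Its validity rests on the fact that for elements of the subalgebra, respectively the sublattice, the lattice meet and join are genuinely the infimum and supremum computed in $\mathbf P$, so that the identities $L(a,b)=L(a\wedge b)$ and $U(a\wedge b,a\wedge b')=U\bigl((a\wedge b)\vee(a\wedge b')\bigr)$ hold in $\mathbf P$ and the computation of the first Lemma of Section~3 carries over. Once this compatibility is secured, part (i) is a routine Zorn argument and part (ii) reduces to the standard characterization of Boolean algebras among orthomodular lattices.
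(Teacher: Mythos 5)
Your proof is correct, and part (i) follows the paper's own route exactly: the paper likewise obtains $B$ as a maximal subset of $P$ containing $A$, closed under $'$ and with all pairs $\mathrel\Delta$-related, via Zorn's Lemma, and dismisses the claim that $x\mathrel\Delta y$ for all $x,y\in A$ with the word ``obviously''. Your justification of that claim through $a=(a\wedge b)\vee(a\wedge b')$ and the $\mathrel\Delta$/$\mathrel{{\rm C}}$ translation is a legitimate way to fill that gap; the paper's own later lemma on Boolean subposets instead uses the poset-distributivity identity $UL(U(b,b'),a)=U(L(b,a),L(b',a))$, which avoids referring to the lattice operations at all. In part (ii) you genuinely diverge: the paper does not invoke Foulis--Holland but verifies the distributive law by hand, setting $d:=(a\wedge c)\vee(b\wedge c)$ and $e:=(a\vee b)\wedge c$ and computing $e=d\vee(e\wedge d')=\cdots=d\vee0=d$ using orthomodularity together with the identities $x=(x\wedge y)\vee(x\wedge y')$ and $x=(x\vee y)\wedge(x\vee y')$ extracted from $x\mathrel\Delta y$. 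Your route---total commutation plus the classical fact that an orthomodular lattice in which $\mathrel{{\rm C}}$ is total is Boolean---is shorter and standard, at the price of importing an external theorem, whereas the paper's computation is self-contained. Both arguments lean on the same unstated compatibility assumption that you rightly isolate, namely that the meets and joins of the substructure are genuine infima and suprema in $\mathbf P$ (otherwise $x\mathrel\Delta y$, which is phrased in terms of $L$ and $U$ of $\mathbf P$, cannot be converted into $x=(x\wedge y)\vee(x\wedge y')$ inside the sublattice); since the paper's proof needs this just as much as yours does, it is not a defect specific to your argument.
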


\begin{proof}
\
\begin{enumerate}[(i)]
\item Obviously, $x'\in A$ and $x\mathrel\Delta y$ for all $x,y\in A$. Hence, according to Zorn's Lemma there exists a maximal subset $B$ of $P$ including $A$ and satisfying $x'\in B$ and $x\mathrel\Delta y$ for all $x,y\in B$. Of course, $B$ is a block of $\mathbf P$.
\item Let $a,b,c\in L$. Since $x\mathrel\Delta y$ for all $x,y\in L$ we have $x=(x\wedge y)\vee(x\wedge y')$ for all $x,y\in L$ and, dually, $x=(x\vee y)\wedge(x\vee y')$ for all $x,y\in L$. Put $d:=(a\wedge c)\vee(b\wedge c)$ and $e:=(a\vee b)\wedge c$. Because of $d\leq e$ we have
\begin{align*}
e & =d\vee(e\wedge d')=d\vee((a\vee b)\wedge c\wedge(a'\vee c')\wedge(b'\vee c'))= \\
  & =d\vee((a\vee b)\wedge(a\vee c)\wedge(a'\vee c)\wedge(b\vee c)\wedge(b'\vee c)\wedge(a'\vee c')\wedge(b'\vee c'))= \\
  & =d\vee((a\vee b)\wedge(a\vee c)\wedge(a'\vee c)\wedge(a'\vee c')\wedge(b\vee c)\wedge(b'\vee c)\wedge(b'\vee c'))= \\
  & =d\vee((a\vee b)\wedge(a\vee c)\wedge a'\wedge(b\vee c)\wedge b')= \\
  & =d\vee((a\vee b)\wedge(a\vee c)\wedge(a'\wedge b')\wedge(b\vee c))=d\vee0=d.
\end{align*}
\end{enumerate}
\end{proof}

\begin{remark}
A similar assertion does not hold when a maximal orthomodular sublattice of an orthogonal poset $\mathbf P$ is considered instead of a Boolean algebra. One can easily check that for the maximal sublattice $\mathbf P_a$ from Example~\ref{ex3} we have
\[
U(e)\neq P=U(\{\emptyset\})=U(\{\emptyset\},\{\emptyset\})=U(L(e,h),L(e,h'))
\]
and hence $e\not\mathrel\Delta h$ which shows that $P_a$ is not contained in any $\Delta$-block of $\mathbf P$.
\end{remark}

Let us recall from \cite{LR} that a {\em poset} $(P,\leq)$ is called {\em modular} if for all $a,b,c\in P$,
\[
a\leq c\text{ implies }L(U(a,b),c)=LU(a,L(b,c))
\]
or, equivalently,
\[
a\leq c\text{ implies }U(a,L(b,c))=UL(U(a,b),c).
\]

The following lemma says that every modular orthogonal poset is orthomodular.

\begin{lemma}\label{lem5}
Let $\mathbf P=(P,\leq,{}',0,1)$ be a modular orthogonal poset. Then $\mathbf P$ is orthomodular.
\end{lemma}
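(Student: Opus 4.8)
The plan is to reduce the statement to the criterion of Proposition~\ref{prop1} and then extract it from a single well-chosen instance of the modular law. By Proposition~\ref{prop1}, an orthogonal poset is orthomodular precisely when $x\leq y$ together with $y\wedge x'=0$ force $x=y$. So I would assume $x\leq y$ and $y\wedge x'=0$ and aim to prove $x=y$; since $\mathbf P$ is already orthogonal, nothing further is required.

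The key move is to apply the second form of modularity, $a\leq c\Rightarrow U(a,L(b,c))=UL(U(a,b),c)$, with the substitution $a:=x$, $b:=x'$, $c:=y$; the premise $a\leq c$ is exactly the hypothesis $x\leq y$. I then simplify both sides using the orthoposet axioms. On the left, $L(x',y)=L(y,x')=\{0\}$, because the hypothesis $y\wedge x'=0$ says that $0$ is the infimum of $\{y,x'\}$; hence $U(x,L(x',y))=U(x,0)=U(x)$, the last equality because $0$ lies below everything. On the right, the complementation law gives $x\vee x'=1$, so $U(x,x')=U(1)=\{1\}$; consequently $L(U(x,x'),y)=L(1,y)=L(y)$ and therefore $UL(U(x,x'),y)=UL(y)=U(y)$, the final step because $y$ is the greatest element of $L(y)$.

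Putting the two computations together, modularity yields $U(x)=U(y)$. In any poset this forces $x=y$: from $y\in U(y)=U(x)$ we get $x\leq y$, and from $x\in U(x)=U(y)$ we get $y\leq x$. By Proposition~\ref{prop1}, $\mathbf P$ is orthomodular.

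I expect the only real care to lie in the bookkeeping of the $L$ and $U$ operators, specifically in translating the meet condition $y\wedge x'=0$ into the set equality $L(y,x')=\{0\}$ and in collapsing $U(x,x')$ to $\{1\}$ via the complementation axiom. Choosing the correct instance of the modular law is the one genuinely inventive step; once $a,b,c$ are fixed as above, everything else is a routine simplification. Note also that the two equivalent forms of modularity are interchangeable here, so the first form $L(U(a,b),c)=LU(a,L(b,c))$ could be used instead, with an entirely parallel computation.
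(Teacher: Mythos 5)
Your argument is correct. You and the paper in fact invoke the very same instance of the modular law, namely the triple $(x,x',y)$ with $x\leq y$ in the form $U(x,L(x',y))=UL(U(x,x'),y)$, but you finish along a different route. The paper does not pass through Proposition~\ref{prop1}: it computes $U(b)=UL(U(a,a'),b)=U(a,L(a',b))=U(a,L(a'\wedge b))=U(a\vee(a'\wedge b))$ and reads off the orthomodular identity $b=a\vee(a'\wedge b)$ directly; this needs the existence of $a'\wedge b$ and of $a\vee(a'\wedge b)$, which the paper obtains from orthogonality ($a\perp b'$ gives $a\vee b'$ and hence, by De Morgan, $a'\wedge b$; and $a\perp a'\wedge b$ gives the final join). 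You instead target the criterion of Proposition~\ref{prop1} and use its extra hypothesis $y\wedge x'=0$ to collapse $L(x',y)$ to $\{0\}$, so that the same modularity instance reduces to $U(x)=U(y)$ and hence $x=y$, with no existence of meets or joins required along the way. Your route is slightly more economical in that respect, at the price of resting on Proposition~\ref{prop1}, which the paper quotes from the literature without proof; the paper's route is self-contained and delivers the orthomodular law in its original form.
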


\begin{proof}
If $a,b\in P$ and $a\leq b$ then
\begin{align*}
U(b) & =UL(b)=UL(1,b)=UL(U(a,a'),b)=U(a,L(a',b))=U(a,L(a'\wedge b))= \\
     & =U(a,a'\wedge b)=U(a\vee(a'\wedge b))
\end{align*}
and hence $b=a\vee(a'\wedge b)$.
\end{proof}

However, the converse of Lemma~\ref{lem5} does not hold in general. For example, consider the twenty-element orthomodular poset $\mathbf P$ from Example~\ref{ex1}. Then $\mathbf P$ is not modular because $a\leq i'$, but
\begin{align*}
L(U(a.b'),i') & =L(N,i')=L(i')=\{a,b,d,e\}\neq\{a,d\}=L(h',i')=LU(a,d)= \\
              & =LU(a,L(d))=LU(a,L(b',i')).
\end{align*}
Recall that a {\em poset} $(P,\leq)$ is called {\em distributive} if it satisfies the identity
\[
L(U(x,y),z)\approx LU(L(x,z),L(y,z))
\]
or, equivalently,
\begin{align*}
U(L(x,z),L(y,z)) & \approx UL(U(x,y),z), \\
     U(L(x,y),z) & \approx UL(U(x,z),L(y,z)), \\
L(U(x,z),L(y,z)) & \approx LU(L(x,y),z).
\end{align*}
Of course, every distributive poset is modular. A complemented {\em poset} is called {\em Boolean} if it is distributive.

A Boolean poset need not be orthomodular, see e.g.\ the following one depicted in Fig.~6:
\vspace*{-2mm}
\begin{center}
\setlength{\unitlength}{7mm}
\begin{picture}(8,10)
\put(4,1){\circle*{.3}}
\put(1,3){\circle*{.3}}
\put(3,3){\circle*{.3}}
\put(5,3){\circle*{.3}}
\put(7,3){\circle*{.3}}
\put(1,5){\circle*{.3}}
\put(7,5){\circle*{.3}}
\put(1,7){\circle*{.3}}
\put(3,7){\circle*{.3}}
\put(5,7){\circle*{.3}}
\put(7,7){\circle*{.3}}
\put(4,9){\circle*{.3}}
\put(4,1){\line(-3,2)3}
\put(4,1){\line(-1,2)1}
\put(4,1){\line(1,2)1}
\put(4,1){\line(3,2)3}
\put(4,9){\line(-3,-2)3}
\put(4,9){\line(-1,-2)1}
\put(4,9){\line(1,-2)1}
\put(4,9){\line(3,-2)3}
\put(1,3){\line(0,1)4}
\put(7,3){\line(0,1)4}
\put(1,5){\line(1,1)2}
\put(1,3){\line(1,1)4}
\put(3,3){\line(1,1)4}
\put(5,3){\line(1,1)2}
\put(3,3){\line(-1,1)2}
\put(5,3){\line(-1,1)4}
\put(7,3){\line(-1,1)4}
\put(7,5){\line(-1,1)2}
\put(3.85,.25){$0$}
\put(.3,2.85){$a$}
\put(2.3,2.85){$b$}
\put(5.4,2.85){$c$}
\put(7.4,2.85){$d$}
\put(.3,4.85){$e$}
\put(7.4,4.85){$e'$}
\put(.3,6.85){$d'$}
\put(2.3,6.85){$c'$}
\put(5.4,6.85){$b'$}
\put(7.4,6.85){$a'$}
\put(3.85,9.4){$1$}
\put(3.2,-.75){{\rm Fig.\ 6}}
\end{picture}
\end{center}
\vspace*{4mm}
Here $a\leq d'$, but $a\vee d$ is not defined since $b'$ and $c'$ are different minimal upper bounds of $a$ and $d$. However, Boolean posets that are lattices are orthomodular.

For arbitrary orthoposets we can prove

\begin{lemma}\label{lem6}
Let $\mathbf P=(P,\leq,{}',0,1)$ be an orthoposet, $a,b\in P$ and $(B,\leq,{}',0,1)$ a Boolean subposet of $\mathbf P$. Then {\rm(i)} and {\rm(ii)} hold:
\begin{enumerate}[{\rm(i)}]
\item If $(\{a,b,b'\},\leq)$ is a distributive subposet of $\mathbf P$ then $a\mathrel\Delta b$,
\item $x\mathrel\Delta y$ for all $x,y\in B$,
\item $B$ is contained in some $\Delta$-block of $\mathbf P$.
\end{enumerate}
\end{lemma}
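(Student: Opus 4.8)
The plan is to prove (i) by a single application of the distributive law, then bootstrap (ii) directly from (i), and finally obtain (iii) from (ii) together with the same Zorn's Lemma argument already used for the earlier proposition on Boolean algebras.

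For (i) I would first record the inclusion that holds in every orthoposet: since $L(a,b),L(a,b')\subseteq L(a)$, every $x\geq a$ is an upper bound of $L(a,b)\cup L(a,b')$, so $U(a)\subseteq U(L(a,b),L(a,b'))$. The real content is therefore the reverse inclusion, and this is exactly what distributivity delivers. Reading the hypothesis on $\{a,b,b'\}$ through the equivalent form $U(L(x,z),L(y,z))\approx UL(U(x,y),z)$ with the substitution $x:=b$, $y:=b'$, $z:=a$ gives
\[
U(L(a,b),L(a,b'))=U(L(b,a),L(b',a))=UL(U(b,b'),a).
\]
Now I would invoke that $\mathbf P$ is an orthoposet, so $b\vee b'=1$ and hence $U(b,b')=U(1)=\{1\}$; consequently $L(U(b,b'),a)=L(1,a)=L(a)$ and $UL(U(b,b'),a)=UL(a)=U(a)$. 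Combining, $U(a)=U(L(a,b),L(a,b'))$, that is, $a\mathrel\Delta b$.

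For (ii) I would feed (i) each pair of elements. Fix $x,y\in B$. Since $B$ is a Boolean subposet it is closed under complementation, so $y'\in B$, and it is distributive, meaning the distributive identities hold for all substitutions of the variables by elements of $B$. In particular they hold for all substitutions drawn from the subset $\{x,y,y'\}\subseteq B$, so $\{x,y,y'\}$ is itself a distributive subposet of $\mathbf P$, and part (i) yields $x\mathrel\Delta y$. As $x,y\in B$ were arbitrary, this is (ii). For (iii) I would observe that $B$ already satisfies both defining closure conditions of a $\Delta$-block apart from maximality: $x'\in B$ for all $x\in B$, and $x\mathrel\Delta y$ for all $x,y\in B$ by (ii). I would then apply Zorn's Lemma exactly as in the proof of the earlier proposition treating the Boolean-algebra case: the family of subsets of $P$ that include $B$ and still satisfy these two conditions is nonempty and closed under unions of chains, hence has a maximal member, which is by definition a $\Delta$-block of $\mathbf P$ containing $B$.

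I expect the only genuine subtlety to lie in (i): one must match the definition of $a\mathrel\Delta b$, written as $U(a)=U(L(a,b),L(a,b'))$, to the correct one of the four equivalent distributive identities, and choose the substitution so that the auxiliary supremum $b\vee b'$ collapses to $1$; after the identity is applied, the orthoposet axioms finish the computation mechanically. Parts (ii) and (iii) are then essentially bookkeeping, resting respectively on the fact that the distributive laws for $B$ are inherited by the three-element subset $\{x,y,y'\}$, and on the standard Zorn's Lemma extension already carried out earlier in the paper.
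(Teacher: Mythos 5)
Your proof is correct and follows essentially the same route as the paper: part (i) is the distributive identity $U(L(x,z),L(y,z))\approx UL(U(x,y),z)$ applied with $x:=b$, $y:=b'$, $z:=a$ together with $U(b,b')=\{1\}$, and parts (ii) and (iii) are the same restriction-to-$\{x,y,y'\}$ and Zorn's Lemma arguments the authors give.
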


\begin{proof}
\
\begin{enumerate}[(i)]
\item If $(\{a,b,b'\},\leq)$ is distributive then
\begin{align*}
U(a) & =UL(a)=UL(1,a)=UL(U(b.b'),a)=U(L(b,a),L(b',a))= \\
     & =U(L(a,b),L(a,b')),
\end{align*}
i.e.\ $a\mathrel\Delta b$.
\item
If $c,d\in B$ then also $d'\in B$ and hence $c\mathrel\Delta d$ by (i).
\item For all $x,y\in B$ we have $x'\in B$ and by (ii) we have $x\mathrel\Delta y$. Applying Zorn's Lemma we see that $B$ is contained in some $\Delta$-block of $\mathbf P$.
\end{enumerate}
\end{proof}

With respect to Lemma~\ref{lem6} (i) we can ask if a $\Delta$-block of an orthoposet is distributive, i.e.\ if it is a Boolean poset. In what follows we partly solve the problem. At first, we recall the following useful concept introduced by J.~Tkadlec in \cite{T97}.

\begin{definition}\label{def1}
{\rm(}cf.\ {\rm\cite{T97})} Let $\mathbf P=(P,\leq,{}',0,1)$ be an orthomodular poset. Then $\mathbf P$ is called {\em weakly Boolean} if the following condition holds:
\begin{itemize}
\item If $x\wedge y=x\wedge y'=0$ then $x=0$
\end{itemize}
{\rm(}$x,y\in P${\rm)}. Further, $\mathbf P$ is said to have the {\em property of maximality} if for all $x,y\in P$ the set $L(x,y)$ has a maximal element.
\end{definition}

It was shown in \cite{T97}, Theorem 4.2. that every weakly Boolean orthomodular poset having the property of maximality is a Boolean algebra. Using this, we can state the following result.

\begin{theorem}
Let $\mathbf P=(P,\leq,{}',0,1)$ be an orthomodular poset. Then the following hold:
\begin{enumerate}[{\rm(i)}]
\item Every $\Delta$-block of $\mathbf P$ is a weakly Boolean orthomodular subposet of $\mathbf P$,
\item every $\Delta$-block of $\mathbf P$ having the property of maximality is a Boolean subalgebra of $\mathbf P$.
\end{enumerate}
\end{theorem}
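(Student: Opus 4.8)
The plan is to dispatch the two parts in order, with (ii) reduced almost immediately to (i) by invoking Tkadlec's result, so that essentially all of the content sits in establishing (i). Since an orthomodular poset is in particular an orthogonal poset, the first step for (i) is to note that Proposition~\ref{prop3} already gives that a $\Delta$-block $B$ of $\mathbf P$ is an orthomodular subposet; what then remains is to verify that, regarded as an orthomodular poset in its own right, $B$ is weakly Boolean in the sense of Definition~\ref{def1}.

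To that end I would take $x,y\in B$ with $x\wedge y=x\wedge y'=0$ (meets formed in $B$) and aim to show $x=0$. The engine is the defining property of a $\Delta$-block: since $x,y\in B$ we have $x\mathrel\Delta y$, that is, $U(x)=U(L(x,y),L(x,y'))$. If the two meet-hypotheses can be read as $L(x,y)=\{0\}$ and $L(x,y')=\{0\}$, then $U(L(x,y),L(x,y'))=U(\{0\})=U(0)=P$, so $U(x)=P$; since $0\in U(x)$ this forces $x\le 0$, i.e.\ $x=0$. In fact the same computation, carried out with the upper and lower bounds taken inside $B$, shows the cleaner statement that $\mathrel\Delta$ holds internally in $B$, namely $U_B(x)=U_B(L_B(x,y),L_B(x,y'))$ for all $x,y\in B$; weak Booleanness is then immediate, because $x\ne 0$ gives $U_B(x)\ne B$ and hence one of $L_B(x,y)$, $L_B(x,y')$ must be nonzero.

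The hard part will be exactly the point glossed over above: reconciling the meet taken inside the subposet $B$ with the lower-bound sets $L(x,y)$, $L(x,y')$ occurring in $\mathrel\Delta$, which are computed in the ambient poset $\mathbf P$. This is not a formality, since $\mathrel\Delta$ is not inherited by arbitrary orthomodular subposets; what should rescue the argument is precisely the maximality built into the definition of a $\Delta$-block. Concretely, I would try to show that a common lower bound $z$ of $x$ and $y$ in $\mathbf P$, together with $z'$, is $\mathrel\Delta$-related to every element of $B$, so that by maximality $z$ already lies in $B$; this upgrades $L(x,y)\cap B=\{0\}$ to $L(x,y)=\{0\}$ and closes the gap. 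Establishing this compatibility claim from $z\le x$, $z\le y$ and the fact that all pairs in $B$ are $\mathrel\Delta$-related, using Lemma~\ref{lem3} and orthomodularity of $\mathbf P$, is where I expect the real effort to lie.

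Finally, for (ii) the remaining work is light. By (i) a $\Delta$-block $B$ is a weakly Boolean orthomodular subposet, and if $B$ additionally has the property of maximality then Tkadlec's Theorem~4.2 of \cite{T97} applies verbatim to the orthomodular poset $B$ and yields that $B$ is a Boolean algebra. As a Boolean algebra is in particular a distributive lattice closed under ${}'$, $0$ and $1$, this exhibits $B$ as a Boolean subalgebra of $\mathbf P$, which is precisely assertion (ii).
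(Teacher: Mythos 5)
Your core computation for (i) is exactly the paper's proof: from $a\mathrel\Delta b$ and $a\wedge b=a\wedge b'=0$ one gets $U(a)=U(L(a,b),L(a,b'))=U(0,0)=U(0)=P$, hence $a=0$; and (ii) is, as you say, an immediate application of Theorem~4.2 of \cite{T97} on top of Proposition~\ref{prop3}. The paper's entire proof of (i) is this two-line calculation. It reads the hypotheses $a\wedge b=a\wedge b'=0$ as meets in the ambient poset $\mathbf P$, i.e.\ as $L(a,b)=L(a,b')=\{0\}$, so the issue you single out as ``the hard part'' does not arise there, and your first computation already reproduces the intended argument in full.

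The part of your proposal that goes beyond the paper is, however, an unresolved gap rather than a completed strengthening. You leave unproved the claim that any $z\in L(x,y)$ with $x,y\in B$ satisfies $z\mathrel\Delta w$, $w\mathrel\Delta z$, $z'\mathrel\Delta w$, $w\mathrel\Delta z'$ for every $w\in B$, so that maximality of the $\Delta$-block would force $z\in B$. Nothing in the hypotheses supports this: from $z\leq x$ and $z\leq y$, Lemma~\ref{lem4} only yields the $\Delta$-relations between $z$ and $x$, $y$ (and their complements), and says nothing about elements $w\in B$ unrelated to $z$; a $\Delta$-block need not be closed under existing meets. So the internal-meet implication $L(x,y)\cap B=\{0\}$ and $L(x,y')\cap B=\{0\}$ imply $x=0$ is not established by your sketch, and your argument is complete only under the ambient reading. (Your underlying worry is not frivolous: Tkadlec's theorem is stated for $B$ as an orthomodular poset in its own right, with meets formed in $B$, and the paper passes over this point silently. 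But the bridge you propose would not close that gap; if you want to pursue it, you need a different mechanism than maximality of the block.)
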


\begin{proof}
\
\begin{enumerate}[(i)]
\item Let $B$ be a $\Delta$-block of $\mathbf P$ and $a,b\in B$. Then $a'\in B$. Moreover, if $a\wedge b=a\wedge b'=0$ then $U(a)=U(L(a,b),L(a,b'))=U(0,0)=U(0)$ and hence $a=0$.
\item This follows from (i) and from Theorem~4.2 in \cite{T97}.
\end{enumerate}
\end{proof}

Hence, for orthomodular posets we know that every $\Delta$-block of is a weakly Boolean poset, and, in a particular case, it is a Boolean algebra. Fig.~7 below shows an example of an orthoposet $(P,\leq,{}',0,1)$ which satisfies the conditions of Definition~\ref{def1}, but which is not orthomodular, it is not even an orthogonal poset. On the other hand, it is distributive and hence Boolean thus it satisfies $x\Delta y$ for all $x,y\in P$ according to Lemma~\ref{lem6}. The example also shows that a $\Delta$-block of an orthoposet need not be a sublattice.

\begin{example}
The Boolean poset $\mathbf P=(P,\leq,{}',0,1)$ depicted in Fig.~7
\vspace*{-2mm}
\begin{center}
\setlength{\unitlength}{7mm}
\begin{picture}(8,8)
\put(4,1){\circle*{.3}}
\put(1,3){\circle*{.3}}
\put(3,3){\circle*{.3}}
\put(5,3){\circle*{.3}}
\put(7,3){\circle*{.3}}
\put(1,5){\circle*{.3}}
\put(3,5){\circle*{.3}}
\put(5,5){\circle*{.3}}
\put(7,5){\circle*{.3}}
\put(4,7){\circle*{.3}}
\put(4,1){\line(-3,2)3}
\put(4,1){\line(-1,2)1}
\put(4,1){\line(1,2)1}
\put(4,1){\line(3,2)3}
\put(4,7){\line(-3,-2)3}
\put(4,7){\line(-1,-2)1}
\put(4,7){\line(1,-2)1}
\put(4,7){\line(3,-2)3}
\put(1,3){\line(0,1)2}
\put(1,3){\line(1,1)2}
\put(1,3){\line(2,1)4}
\put(3,3){\line(-1,1)2}
\put(3,3){\line(0,1)2}
\put(3,3){\line(2,1)4}
\put(5,3){\line(-2,1)4}
\put(5,3){\line(0,1)2}
\put(5,3){\line(1,1)2}
\put(7,3){\line(-2,1)4}
\put(7,3){\line(-1,1)2}
\put(7,3){\line(0,1)2}
\put(3.85,.25){$0$}
\put(.3,2.85){$a$}
\put(2.3,2.85){$b$}
\put(5.4,2.85){$c$}
\put(7.4,2.85){$d$}
\put(.3,4.85){$d'$}
\put(2.3,4.85){$c'$}
\put(5.4,4.85){$b'$}
\put(7.4,4.85){$a'$}
\put(3.85,7.4){$1$}
\put(3.2,-.75){{\rm Fig.\ 7}}
\end{picture}
\end{center}
\vspace*{4mm}
is a $\Delta$-block of itself and it is not a lattice.
\end{example}

\begin{lemma}
Let $\mathbf P=(P,\leq,{}',0,1)$ be an orthoposet satisfying $x\mathrel\Delta y$ for all $x,y\in P$. Then $\mathbf P$ does not contain a subposet isomorphic to $\mathbf O_6$ {\rm(}see Fig.~2{\rm)}.
\end{lemma}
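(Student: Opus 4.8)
The plan is to argue by contradiction, extracting from an embedded copy of $\mathbf O_6$ a single pair of elements on which $\mathrel\Delta$ breaks down. Suppose $\mathbf P$ contains a subposet isomorphic to $\mathbf O_6$ and label its elements as in Fig.~2, so that $0<a<b<1$ and $0<b'<a'<1$, the element $a'$ (resp.\ $b'$) is the orthocomplement of $a$ (resp.\ $b$), and the four crossing pairs $\{a,b'\},\{a,a'\},\{b,b'\},\{b,a'\}$ are incomparable. Before the main step I would observe that the bottom of this copy is in fact the global $0$: it lies below both $a$ and $a'$, hence below $a\wedge a'=0$; dually its top is $1$.

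The heart of the argument is to test the pair $(b,a)$ and to show $b\not\mathrel\Delta a$, which already contradicts the assumption that $x\mathrel\Delta y$ for all $x,y\in P$. Since $a\leq b$ we have $L(b,a)=L(a)$. The decisive input is that in a copy of $\mathbf O_6$ the incomparable elements $b$ and $a'$ admit only $0$ as a common lower bound, that is $L(b,a')=\{0\}$. Granting this, $L(b,a')\subseteq L(a)$, so
\[
U(L(b,a),L(b,a'))=UL(a)=U(a).
\]
On the other hand $a<b$ yields $U(b)\subseteq U(a)$ together with $a\in U(a)\setminus U(b)$, whence $U(b)\neq U(a)$. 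Consequently $U(b)\neq U(L(b,a),L(b,a'))$, i.e.\ $b\not\mathrel\Delta a$, the required contradiction.

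The step I expect to be the crux --- and the one that pins down the intended reading of the statement --- is the equality $L(b,a')=\{0\}$ computed in $\mathbf P$. It does \emph{not} follow from the induced order alone: in $\mathbf2^n$ one finds six elements whose induced order is exactly that of $\mathbf O_6$ (for instance $a=\{1\}$ and $b=\{1,2\}$ together with their complements), yet there $b\wedge a'\neq\emptyset$, and $\mathbf2^n$, being Boolean, satisfies $x\mathrel\Delta y$ for all $x,y$ by Lemma~\ref{lem6}. Hence ``isomorphic to $\mathbf O_6$'' must be understood as in the proof of the earlier $\mathbf O_6$-characterisation, where the crossing meet $b\wedge a'$ is genuinely $0$ in $P$ (and dually $a\vee b'=1$); under that reading $L(b,a')=\{0\}$ is precisely the defining feature of the embedded $\mathbf O_6$, and the computation above completes the proof.
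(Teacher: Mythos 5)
Your proof is correct and follows essentially the same route as the paper's: both test the pair $(b,a)$, use $L(b,a)=L(a)$ and $L(b,a')=\{0\}$ to get $U(L(b,a),L(b,a'))=U(a)\neq U(b)$, contradicting $b\mathrel\Delta a$. Your added remarks --- making the final contradiction explicit via $a\in U(a)\setminus U(b)$ (the paper's ``$b\in U(b)=\cdots=U(a)$, a contradiction'' is terser and arguably needs this) and pinning down that $L(b,a')=\{0\}$ must hold in $P$ itself, not merely in the induced order --- are accurate refinements of the same argument rather than a different approach.
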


\begin{proof}
If $\mathbf P$ would contain a subposet isomorphic to $\mathbf O_6$ then we would have $b\mathrel\Delta a$, i.e.
\[
b\in U(b)=U(L(b,a),L(b,a'))=U(L(a),L(0))=U(a,0)=U(a),
\]
a contradiction.
\end{proof}

Finally, we add two more result concerning intervals of orthomodular posets. The following result was proved in \cite{PP} (Propositions~1.3.6 and 1.3.12).

\begin{theorem}\label{th3}
Let $(P,\leq,{}',0,1)$ be an orthomodular poset and $a,b\in P$ with $a\leq b$. Then $(x'\vee a)\wedge b=(x'\wedge b)\vee a$ for all $x\in[a,b]$. Put $x^+:=(x'\vee a)\wedge b$ for all $x\in[a,b]$. Then $([a,b],\leq,{}^+,a,b)$ is an orthomodular poset.
\end{theorem}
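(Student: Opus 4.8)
The plan is to prove the two claims in turn, each time reducing everything to the orthomodular law of $P$ together with the commutation calculus of Lemma~\ref{lem2}. The decisive structural remark is that the chain $a\le x\le b$ forces $x'$ to commute with both $a$ and $b$: from $a\le x$ we get $x'\le a'$, so $x'\perp a$ and $a\leftrightarrow x'$; from $x\le b$ we get $b'\le x'$, so $x'$ and $b$ are comparable and $x'\leftrightarrow b$; and $a\leftrightarrow b$ since $a\le b$. Consequently all meets and joins written below, such as $x'\vee a$ and $x'\wedge b$, are defined by Lemma~\ref{lem2}(iii), and since a join of two elements each commuting with $b$ again commutes with $b$, the meets with $b$ used later exist as well.

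First I would establish the identity. The element $t:=(x'\vee a)\wedge b$ exists because $b'\le x'\le x'\vee a$ makes $x'\vee a$ and $b$ commute. Writing $m:=x'\wedge b$, one has $m\le x'\le a'$, so $m\perp a$ and $m\vee a$ is defined; as $a,m\le b$ and $a,m\le x'\vee a$, the element $m\vee a$ is a lower bound of $\{x'\vee a,b\}$, giving $m\vee a\le t$. For the reverse inequality I would apply the dual orthomodular law to $a\le t$ to write $t=a\vee(a'\wedge t)$ and then bound $a'\wedge t$: from $a'\wedge t\le(x'\vee a)\wedge a'=x'$, where the last meet equals $x'$ because $x'\le a'$ and $x',a$ commute, together with $a'\wedge t\le b$, we obtain $a'\wedge t\le x'\wedge b=m$ and hence $t=a\vee(a'\wedge t)\le m\vee a$. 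Thus $t=m\vee a$, i.e.\ $(x'\vee a)\wedge b=(x'\wedge b)\vee a$; in particular $x^+$ is well defined for every $x\in[a,b]$.

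Next I would verify the orthoposet axioms on $([a,b],\le,{}^+,a,b)$. The bounds are clear, and ${}^+$ is antitone since $x\le y$ gives $y'\le x'$, hence $(y'\vee a)\wedge b\le(x'\vee a)\wedge b$. For the involution I would use De Morgan and both forms of the identity: $(x^+)'=(x\vee b')\wedge a'$, and then $x^{++}=((x^+)'\wedge b)\vee a=(x\wedge a')\vee a=x$, where $(x\vee b')\wedge b=x$ and $(x\wedge a')\vee a=x$ are instances of the orthomodular law for $a\le x\le b$ combined with the commutation of $x$ with $a$ and with $b$. The relative complementation $x\wedge x^+=a$ and $x\vee x^+=b$ follows similarly: since $x^+\le x'\vee a$ one has $x\wedge x^+\le x\wedge(x'\vee a)=a$ by the dual orthomodular law applied to $a\le x$, while $a\le x\wedge x^+$ is immediate; the join identity is dual.

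Finally I would show that $[a,b]$ is an orthogonal poset and then deduce orthomodularity. If $x,y\in[a,b]$ are orthogonal in the interval, i.e.\ $x\le y^+$, then writing $y^+=(y'\wedge b)\vee a$ and applying the dual orthomodular law to $a\le x\le y^+$ gives $a'\wedge x\le y'\wedge b\le y'$, so $a'\wedge x\perp y$ in $P$; hence $x\vee y=a\vee(a'\wedge x)\vee y=(a'\wedge x)\vee y$ exists as a genuine orthogonal join in $P$ and lies in $[a,b]$, so it is the interval join. With orthogonal joins available, the orthomodular law of $[a,b]$, namely $x\vee_{[a,b]}(x\vee_{[a,b]}y^+)^+=y$ for $x\le y$, translates through the identity and De Morgan into an instance of the orthomodular law of $P$. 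I expect the main obstacle to be exactly this last bookkeeping in the poset setting: verifying that each meet and join invoked actually exists (so that $[a,b]$ is genuinely an orthogonal poset) and that the relative orthomodular identity collapses to orthomodularity of $P$; once the part-one identity is in hand, these reductions are routine but must be tracked carefully.
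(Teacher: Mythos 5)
The paper itself contains no proof of Theorem~\ref{th3}: the result is imported verbatim from Pt\'ak and Pulmannov\'a \cite{PP} (Propositions~1.3.6 and 1.3.12), so there is no in-paper argument to measure yours against. Your blind proof is a genuine, essentially correct, self-contained derivation, built entirely on the orthomodular law of $P$ and on instances of $\leftrightarrow$ forced by comparability, which is the natural route. Two points need attention. First, the blanket principle you invoke at the outset --- that a join of two elements each commuting with $b$ again commutes with $b$ --- is not available in general orthomodular posets without further argument (it is closely tied to regularity of the poset, which can fail); fortunately you never actually rely on it, since every meet you form is justified separately by a comparability argument, e.g.\ $(x'\vee a)\wedge b$ exists because $b'\le x'\le x'\vee a$ gives $x'\vee a\leftrightarrow b$. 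Delete the general claim and keep only those ad hoc justifications. (Similarly, ``$x'$ and $b$ are comparable'' should read ``$x'$ and $b'$ are comparable''; the conclusion $x'\leftrightarrow b$ still follows from Lemma~\ref{lem2}.) Second, the final step --- the orthomodular law inside $[a,b]$ --- is only sketched, but the sketch does complete along the lines you indicate: for $x\le y$ in $[a,b]$ put $w:=x\vee y^+=x\vee(y'\wedge b)$; then $L(w')\cap L(b)=L(x')\cap L(y\vee b')\cap L(b)=L(x')\cap L(y)=L(x'\wedge y)$, using $(y\vee b')\wedge b=y$ (dual orthomodular law for $y\le b$), so $w^+=(w'\wedge b)\vee a=(x'\wedge y)\vee a$ and $x\vee w^+=x\vee(x'\wedge y)=y$ because $a\le x$. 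With those two repairs your argument is a complete proof of the cited result.
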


\begin{example}
Consider the orthomodular poset $\mathbf P$ of Example~\ref{ex1}. Then the Hasse diagram of $([\emptyset,i'],\subseteq,{}^+,\emptyset,i')$ is depicted in Fig.~8:
\vspace*{-2mm}
\begin{center}
\setlength{\unitlength}{7mm}
\begin{picture}(8,6)
\put(4,1){\circle*{.3}}
\put(1,3){\circle*{.3}}
\put(3,3){\circle*{.3}}
\put(5,3){\circle*{.3}}
\put(7,3){\circle*{.3}}
\put(4,5){\circle*{.3}}
\put(4,1){\line(-3,2)3}
\put(4,1){\line(-1,2)1}
\put(4,1){\line(1,2)1}
\put(4,1){\line(3,2)3}
\put(4,5){\line(-3,-2)3}
\put(4,5){\line(-1,-2)1}
\put(4,5){\line(1,-2)1}
\put(4,5){\line(3,-2)3}
\put(3.85,.25){$\emptyset$}
\put(.3,2.85){$a$}
\put(2.4,2.85){$b$}
\put(5.4,2.85){$d$}
\put(7.4,2.85){$e$}
\put(3.85,5.4){$i'$}
\put(3.2,-.75){{\rm Fig.\ 8}}
\end{picture}
\end{center}
\vspace*{4mm}
We have
\[
\begin{array}{r|rrrrrr}
  x & \emptyset & a & b & d & e &        i' \\
\hline
x^+ &        i' & e & d & b & a & \emptyset
\end{array}
\]
\end{example}

Now we show when an orthomodular poset $\mathbf P$ can be embedded into a direct product of intervals of $\mathbf P$. For this, we define the following concept.

\begin{definition}
We call an element $c$ of an orthoposet $(P,\leq,{}',0,1)$ {\em central} if $x\mathrel\Delta c$ for all $x\in P$ and if, moreover, for any $x\in P$ the infima $x\wedge c$ and $x\wedge c'$ exist.
\end{definition}

\begin{theorem}\label{th4}
Let $\mathbf P=(P,\leq,{}',0,1)$ be an orthomodular poset and $c\in P$.
\begin{enumerate}[{\rm(i)}]
\item If $c$ is central then $\mathbf P$ can be embedded into $[0,c]\times[0,c']$.
\item If $c$ is central and the following condition holds:
\begin{enumerate}
\item[{\rm(1)}] If $x,y\in P$, $x\leq c$ and $y\leq c'$ then $x\vee y$ is defined, $(x\vee y)\wedge c=x$ and $(x\vee y)\wedge c'=y$.
\end{enumerate}
then $\mathbf P\cong[0,c]\times[0,c']$.
\end{enumerate}
\end{theorem}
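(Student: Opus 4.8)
The plan is to work with the single natural candidate map
\[
\varphi\colon P\to[0,c]\times[0,c'],\qquad \varphi(x):=(x\wedge c,\,x\wedge c'),
\]
which is well defined exactly because centrality guarantees that $x\wedge c$ and $x\wedge c'$ exist (and clearly $x\wedge c\in[0,c]$, $x\wedge c'\in[0,c']$). First I would record the structural facts on the target: by Theorem~\ref{th3} the intervals $[0,c]$ and $[0,c']$ are orthomodular posets whose orthocomplementations are $u^+=u'\wedge c$ and $v^\#=v'\wedge c'$, and $[0,c]\times[0,c']$ is their componentwise direct product, again an orthomodular poset. Note $\varphi(0)=(0,0)$, $\varphi(1)=(c,c')$, and $\varphi$ is order preserving by monotonicity of the existing meets. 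The one identity I would extract from centrality at the outset is that $x\mathrel\Delta c$ together with the existence of the meets yields $U(x)=U(L(x,c),L(x,c'))=U(x\wedge c,x\wedge c')$, i.e.
\[
x=(x\wedge c)\vee(x\wedge c')\qquad(x\in P).
\]
From this, injectivity and order-reflection of $\varphi$ are immediate: if $x\wedge c\leq y\wedge c$ and $x\wedge c'\leq y\wedge c'$ then $y$ is an upper bound of $x\wedge c$ and $x\wedge c'$, hence $x=(x\wedge c)\vee(x\wedge c')\leq y$; applying this in both directions shows that $\varphi$ is an order embedding preserving $0$ and $1$.

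The heart of the proof, and the step I expect to be the main obstacle, is preservation of the orthocomplementation, which amounts to the two identities $x'\wedge c=(x\wedge c)'\wedge c$ and $x'\wedge c'=(x\wedge c')'\wedge c'$. By Lemma~\ref{lem3}(iii) centrality is symmetric in $c$ and $c'$, so it suffices to treat the first, and by uniqueness of orthocomplements in the orthomodular poset $[0,c]$ (the corollary of Proposition~\ref{prop1}) it suffices to show that $x'\wedge c$ is a complement of $x\wedge c$ in $[0,c]$; the meet $(x\wedge c)\wedge(x'\wedge c)=0$ is clear, so the real content is the join
\[
(x\wedge c)\vee(x'\wedge c)=c,
\]
which is exactly the reverse relation $c\mathrel\Delta x$. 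Since $\mathrel\Delta$ is not symmetric in general (Theorem~\ref{th1}(ix)), this cannot simply be quoted; instead I would derive it from the operator $d$. Applying centrality to both $x$ and $x'$ gives $x\mathrel\Delta c$ and $x'\mathrel\Delta c$, so Lemma~\ref{lem3}(v) yields $d(x,c)=\{1\}$, i.e. $U(x\wedge c,x\wedge c',x'\wedge c,x'\wedge c')=\{1\}$. Now put $s:=(x\wedge c)\vee(x'\wedge c)\leq c$ and $t:=(x\wedge c')\vee(x'\wedge c')\leq c'$ (both orthogonal joins, hence defined). Then $s\perp t$, and
\[
U(s,t)=U(x\wedge c,x'\wedge c)\cap U(x\wedge c',x'\wedge c')=d(x,c)=\{1\},
\]
so $s\vee t=1$. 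Thus $t$ is an orthocomplement of $s$, whence $t=s'$ by uniqueness; combined with $t\leq c'$ this gives $s'\leq c'$, i.e. $c\leq s$, and with $s\leq c$ we conclude $s=c$. This establishes $c\mathrel\Delta x$ and therefore the complement identity, so $\varphi(x')=((x\wedge c)^+,(x\wedge c')^\#)$ equals the product-orthocomplement of $\varphi(x)$.

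Putting these together proves (i): $\varphi$ is an order embedding preserving $0,1$ and the orthocomplementation, i.e. an embedding of $\mathbf P$ into $[0,c]\times[0,c']$. For (ii) I would only have to add surjectivity, which is precisely what condition~(1) delivers: given $(u,v)$ with $u\leq c$ and $v\leq c'$, condition~(1) says $u\vee v$ exists with $(u\vee v)\wedge c=u$ and $(u\vee v)\wedge c'=v$, so $\varphi(u\vee v)=(u,v)$. A bijective order embedding preserving the orthocomplementation and the bounds is an isomorphism of orthomodular posets, giving $\mathbf P\cong[0,c]\times[0,c']$.
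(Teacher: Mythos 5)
Your proof is correct, and it follows the paper's overall architecture — the same map $x\mapsto(x\wedge c,x\wedge c')$, the same derivation of $x=(x\wedge c)\vee(x\wedge c')$ from $x\mathrel\Delta c$, the same order-embedding argument, and the same use of condition (1) for surjectivity via $(u,v)\mapsto u\vee v$. The one place where you genuinely diverge is the preservation of the orthocomplementation. The paper handles this by first dualizing to $a'=(a'\vee c)\wedge(a'\vee c')$ via De Morgan and then computing directly that $(a'\vee c)\wedge(a'\vee c')\wedge c=(a'\vee c')\wedge c=(a\wedge c)'\wedge c$, i.e.\ the interval complement from Theorem~\ref{th3}. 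You instead apply centrality to both $x$ and $x'$, invoke Lemma~\ref{lem3}(v) to get $d(x,c)=\{1\}$, form the orthogonal joins $s=(x\wedge c)\vee(x'\wedge c)$ and $t=(x\wedge c')\vee(x'\wedge c')$, and deduce $s=c$, $t=c'$ from $U(s,t)=d(x,c)=\{1\}$ together with the uniqueness of orthocomplements (the corollary of Proposition~\ref{prop1}); this yields both identities $x'\wedge c=(x\wedge c)'\wedge c$ and $x'\wedge c'=(x\wedge c')'\wedge c'$ at once. Your route is slightly longer but has the merit of making explicit where the failure of symmetry of $\mathrel\Delta$ would be an obstacle and how the operator $d$ circumvents it, and it ties the theorem back to the machinery of Section~4; the paper's computation is shorter but relies on the reader accepting the meet-rearrangement $(a'\vee c)\wedge(a'\vee c')\wedge c=(a'\vee c')\wedge c$ in a poset where not all meets exist. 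Both are valid.
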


\begin{proof}
According to Theorem~\ref{th3}, the intervals $[0,c]$ and $[0,c']$ of $\mathbf P$ and hence also their direct product can be considered as orthomodular posets in a canonical way.
\begin{enumerate}[(i)]
\item Assume $c$ to be central. Since
\[
U(x)=U(L(x,c),L(x,c'))=U(L(x\wedge c),L(x\wedge c'))=U(x\wedge c,x\wedge c')
\]
for all $x\in P$ we have
\[
x=(x\wedge c)\vee(x\wedge c')
\]
for all $x\in P$ and hence, using De Morgan's laws,
\[
x=(x\vee c)\wedge(x\vee c')
\]
for all $x\in P$. Now let $a,b\in P$. Further, let $f$ denote the mapping from $P$ to $[0,c]\times[0,c']$ defined by $f(x):=(x\wedge c,x\wedge c')$ for all $x\in P$. If $a\leq b$ then
\[
f(a)=(a\wedge c,a\wedge c')\leq(b\wedge c,b\wedge c')=f(b).
\]
If, conversely, $f(a)\leq f(b)$ then $a\wedge c\leq b\wedge c$ and $a\wedge c'\leq b\wedge c'$ and hence
\[
a=(a\wedge c)\vee(a\wedge c')\leq(b\wedge c)\vee(b\wedge c')=b.
\]
This shows that $a\leq b$ if and only if $f(a)\leq f(b)$. From this we conclude that $f$ is injective. Finally, using $a'=(a'\vee c)\wedge(a'\vee c')$ we have
\begin{align*}
f(a') & =(a'\wedge c,a'\wedge c')=(((a'\vee c)\wedge(a'\vee c'))\wedge c,((a'\vee c)\wedge(a'\vee c'))\wedge c')= \\
      & =((a'\vee c)\wedge(a'\vee c')\wedge c,(a'\vee c)\wedge(a'\vee c')\wedge c')=((a'\vee c')\wedge c,(a'\vee c)\wedge c')= \\
      & =((a\wedge c)'\wedge c,(a\wedge c')'\wedge c')=(f(a))', \\
 f(0) & =(0,0), \\
 f(1) & =(c,c').
\end{align*}
\item Assume $c$ to be central and (1) to hold. Then all what was proved in (i) holds. Let $f$ be defined as in (i) and let $g$ denote the mapping from $[0,c]\times[0,c']$ to $P$ defined by $g(x,y):=x\vee y$ for all $(x,y)\in[0,c]\times[0,c']$. Then
\begin{align*}
  g(f(x)) & =(x\wedge c)\vee(x\wedge c')=x\text{ for all }x\in P, \\
f(g(x,y)) & =((x\vee y)\wedge c,(x\vee y)\wedge c')=(x,y)\text{ for all }(x,y)\in[0,c]\times[0,c'].
\end{align*}
This shows that $f$ and $g$ are mutually inverse bijections between $P$ and $[0,c]\times[0,c']$. Since according to the proof of (i), $f$ is a homomorphism from $\mathbf P$ to $[0,c]\times[0,c']$ satisfying
\[
x\leq y\text{ if and only if }f(x)\leq f(y)
\]
for all $x,y\in P$, $f$ and $g$ are mutually inverse isomorphisms between $\mathbf P$ and $[0,c]\times[0,c']$.
\end{enumerate}
\end{proof}

\begin{remark}
Condition {\rm(1)} of Theorem~\ref{th4} seems to be rather restrictive but this is not the case. If $c\in P$ then $(c,0)$ and $(0,c')$ {\rm(}$=(c'\wedge c,0'\wedge c')=(c,0)'${\rm)} are central elements of $[0,c]\times[0,c']$. Moreover, if
\begin{align*}
(x,0),(0,y) & \in[0,c]\times[0,c'], \\
      (x,0) & \leq(c,0), \\
      (0,y) & \leq(0,c')
\end{align*}
then
\begin{align*}
              (x,0)\vee(0,y) & =(x,y), \\
 ((x,0)\vee(0,y))\wedge(c,0) & =(x,0), \\
((x,0)\vee(0,y))\wedge(0,c') & =(0,y).
\end{align*}
This can be seen as follows: If $(x,y)\in[0,c]\times[0,c']$ then
\begin{align*}
           U(x,y) & =U((x,0),(0,y))=U(L(x,0),L(0,y))=U(L(x)\times L(0),L(0)\times L(y))= \\
                  & =U(L(x,c)\times L(y,0),L(x,0)\times L(y,c'))= \\
									& =U(L((x,y),(c,0)),L((x,y),(0,c'))), \\
            (x,y) & \mathrel\Delta(c,0), \\
 (x,y)\wedge(c,0) & =(x,0), \\
(x,y)\wedge(0,c') & =(0,y)
\end{align*}
showing that $(c,0)$ is a central element of $[0,c]\times[0,c']$. Dually, we have that also $(0,c')$ is a central element of $[0,c]\times[0,c']$. Finally, if
\begin{align*}
(x,0),(0,y) & \in[0,c]\times[0,c'], \\
      (x,0) & \leq(c,0), \\
      (0,y) & \leq(0,c')
\end{align*}
then
\begin{align*}
              (x,0)\vee(0,y) & =(x,y), \\
 ((x,0)\vee(0,y))\wedge(c,0) & =(x,y)\wedge(c,0)=(x,0), \\
((x,0)\vee(0,y))\wedge(0,c') & =(x,y)\wedge(0,c')=(0,y).
\end{align*}
\end{remark}

Authors' addresses:

Ivan Chajda \\
Palack\'y University Olomouc \\
Faculty of Science \\
Department of Algebra and Geometry \\
17.\ listopadu 12 \\
771 46 Olomouc \\
Czech Republic \\
ivan.chajda@upol.cz

Helmut L\"anger \\
TU Wien \\
Faculty of Mathematics and Geoinformation \\
Institute of Discrete Mathematics and Geometry \\
Wiedner Hauptstra\ss e 8-10 \\
1040 Vienna \\
Austria, and \\
Palack\'y University Olomouc \\
Faculty of Science \\
Department of Algebra and Geometry \\
17.\ listopadu 12 \\
771 46 Olomouc \\
Czech Republic \\
helmut.laenger@tuwien.ac.at
\end{document}